\documentclass[11pt,a4paper]{article}
\usepackage{amsthm,amssymb,amsfonts,amsmath,lineno,mathtools,graphicx}
\usepackage{soul}
\usepackage{enumerate}

\textwidth=150mm \textheight=211mm  \oddsidemargin=0cm
\evensidemargin=0cm \topmargin=20mm  \headheight=0cm \headsep=0cm

\usepackage{epsfig,xspace,algorithm,verbatim}
\usepackage{algorithm}
\usepackage{algpseudocode}%Para cuando un código ocupa varias páginas
\usepackage{caption}
\usepackage{color}
\usepackage{enumerate}
\usepackage{esvect} %Vectores
\usepackage{authblk}

\newtheorem{theorem}{Theorem}[section]

\newtheorem{proposition}[theorem]{Proposition}
\newtheorem{corollary}[theorem]{Corollary}
\newtheorem{conjecture}[theorem]{Conjecture}
\newtheorem{definition}[theorem]{Definition}

\newtheorem{open}[theorem]{Open problem}
\newtheorem{claim}[theorem]{Claim}

\newcommand{\bdf}{\begin{df} \begin{rm}}
\newcommand{\edf}{\end{rm} \end{df}}

\makeatletter
\DeclareRobustCommand{\cev}[1]{%
  \mathpalette\do@cev{#1}%
}
\newcommand{\do@cev}[2]{%
  \fix@cev{#1}{+}%
  \reflectbox{$\m@th#1\vec{\reflectbox{$\fix@cev{#1}{-}\m@th#1#2\fix@cev{#1}{+}$}}$}%
  \fix@cev{#1}{-}%
}
\newcommand{\fix@cev}[2]{%
  \ifx#1\displaystyle
    \mkern#23mu
  \else
    \ifx#1\textstyle
      \mkern#23mu
    \else
      \ifx#1\scriptstyle
        \mkern#22mu
      \else
        \mkern#22mu
      \fi
    \fi
  \fi
}

\makeatother
%\linenumbers %Para enumerar las líneas del paper

\begin{document}

%\title{Antimagic and product antimagic \\ graphs with pendant edges}
%
%
%\author{Merc\`e Mora\thanks{Departament de Matemàtiques,
%		Universitat Polit\`ecnica de Catalunya, Spain, {\tt merce.mora@upc.edu}} \and
%	Joaqu\'in Tey\thanks{Math. Department, Universidad Aut\'onoma Metropolitana-Iztapalapa, M\'exico, {\tt jtey@xanum.uam.mx}}}
%
%\date{}
%
%\maketitle

%%%%%%%% nou
\title{Antimagic and product antimagic graphs with pendant edges}

\author[1]{Merc\`e Mora\thanks{This work was partially supported by 
		grant  Gen. Cat. DGR 2021-SGR-00266 and by grant PID2019-104129GB-I00 funded by MICIU/AEI/10.13039/501100011033
		from the Spanish Ministry of Science and Innovation.}}

\author[2]{Joaqu\'in Tey%\thanks{}
}

\affil[1]{Departament de Matem\`atiques, Universitat Polit\`ecnica de Catalunya, 	
	Spain 
	
	{\tt merce.mora@upc.edu}}

\affil[2]{Departamento de Matem\'aticas\\Universidad Aut\'onoma Metropolitana-Iztapalapa, 
	
	M\'exico 
	
	{\tt jtey@xanum.uam.mx}}

%%%%%%%%%%%%%%%%%%%%%%%%%%%%%%%%%%%%%%%%%%%%%%%%%%%%%%%%%%%%%%%%%%%%%%%%%%%%%%
\date{}
\maketitle
%\linenumbers

%%%%%%%% fi nou

\begin{abstract}
Let $G=(V,E)$ be a simple graph of size $m$ and $L$ a set of $m$ distinct real numbers. 
An {\em $L$-labeling} of $G$ is a bijection $\phi: E \rightarrow  L$. We say that $\phi$ is an \emph{antimagic $L$-labeling} if the induced \emph{vertex sum} $\phi_+: V \rightarrow  \mathbb {R}$ defined as $\phi_+(u)=\sum_{uv\in E}\phi(uv)$ is injective. Similarly, $\phi$ is a {\em product antimagic $L$-labeling} of $G$ if the induced \emph{vertex product} $\phi_{\circ}: V \rightarrow  \mathbb {R}$ defined as $\phi_{\circ}(u)=\prod_{uv\in E}\phi(uv)$ is injective. A graph $G$ is \emph{antimagic} (resp. \emph{product antimagic}) if it has an antimagic (resp. a product antimagic) $L$-labeling for $L=\{1,2,\dots,m\}$.
Hartsfield and Ringel conjectured
that every simple connected graph distinct from $K_2$ is antimagic, but the conjecture remains widely open.

We prove, among other results, 
that every connected graph of size $m$, $m \geq 3$, 
admits an antimagic $L$-labeling for every arithmetic sequence $L$ of $m$ positive real numbers, if every vertex of degree at least three is a support vertex. As a corollary, we derive that these graphs are antimagic, reinforcing the veracity of the conjecture by Hartsfield and Ringel.
Moreover, these graphs admit also a product antimagic  $L$-labeling provided that the smallest element of $L$ is at least one. The proof is constructive.
\end{abstract}

\maketitle
\section{Introduction}

An  \emph{$L$-labeling} of a graph $G=(V,E)$ of size $m$ is a bijection $\phi: E \mapsto L$, where $L$ a subset of $m$ real numbers.
The \emph{vertex sum} induced by the $L$-labeling $\phi$ at vertex $u$, denoted by $\phi_+(u)$,  is 
the sum of the labels of the edges incident with $u$.
A graph $G$ is {\em antimagic} if there is an $L$-labeling $\phi:E \mapsto L$ for $L=\{1,2,\dots,m\}$ such that $\phi_+(u)\not= \phi_+(v)$, if $u\not=v$.
Hartsfield and Ringel \cite{HR} conjectured in 1990 that every connected graph different from $K_2$ admits an antimagic labeling. 

\begin{conjecture}[\cite{HR}]\label{H-Ringel}
Every connected graph of size at least 2 is antimagic.
\end{conjecture}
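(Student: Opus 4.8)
The plan is to attack Conjecture~\ref{H-Ringel} by splitting the class of connected graphs according to their local density and isolating, in each regime, a family of \emph{free} edges whose labels can be tuned to separate the induced vertex sums. Fix a connected graph $G=(V,E)$ of size $m\ge 2$ (so $G\ne K_2$ automatically) and seek a bijection $\phi:E\to\{1,\dots,m\}$ with $\phi_+$ injective. The first reduction I would make is to peel off the low-degree periphery: leaves have pairwise distinct sums automatically, and a maximal path of degree-$2$ vertices behaves like a single edge that can be relabelled internally, so it suffices to control the sums on the \emph{reduced core} obtained by suppressing such vertices, subject to the extra constraint that the reattached pendant and chain labels keep the peripheral sums out of the range occupied by the core sums. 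The support-vertex technique announced in the abstract is exactly the tool for the leaf-rich part of this reduction: whenever every vertex of degree at least three owns a private pendant edge, that edge is a free handle that simultaneously lifts the vertex into the correct sum band and breaks ties inside a band, so this paper's main theorem disposes of all graphs that are rich in leaves.

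For graphs that are \emph{not} leaf-rich I would use two complementary engines. In the dense regime — a minimum degree growing like $\log|V|$, or a vertex of degree close to $|V|-1$ — I would set up the polynomial method: encode ``all vertex sums distinct'' as the nonvanishing of $\prod_{u\ne v}\bigl(\phi_+(u)-\phi_+(v)\bigr)$, read as a polynomial in the edge labels, and certify a full-degree monomial with nonzero coefficient, which forces an integer assignment realising distinct sums (a random-labels-plus-local-swaps variant covers the same range). For (near-)regular leaf-free graphs I would instead use the orientation/flow method, converting a suitable Eulerian or nowhere-zero orientation into an antimagic labelling and thereby recovering the known antimagicness of regular graphs. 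The design goal is to let these two engines absorb precisely the graphs that carry no exploitable low-degree slack, while the leaf-and-chain reduction handles the sparse, irregular remainder.

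The backbone of the sparse case would be a global ordering argument. Order $V$ as $v_1,\dots,v_n$ with non-decreasing degree, fix a spanning tree $T$, and assign the labels greedily from largest to smallest so that the partial sums satisfy $\phi_+(v_1)<\phi_+(v_2)<\dots<\phi_+(v_n)$; at each step the still-unlabelled tree edge (or the pendant edge, when present) incident to the current vertex is the adjustable quantity that pushes its sum strictly above its predecessor's. Strict monotonicity along the order yields injectivity at once, with no case analysis on degrees, and positivity of the labels guarantees that the high-degree vertices, carrying more summands, naturally occupy the upper bands.

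The hard part — and the reason Conjecture~\ref{H-Ringel} remains open — is exactly the graphs that fall between the two engines: connected graphs that are simultaneously sparse, almost regular, triangle-free and leaf-free, such as large-girth subcubic graphs and long subdivisions. These possess neither the degree needed for the polynomial method nor the pendant/chain slack needed for the greedy reattachment, and the orientation method by itself does not force the strict separation of sums when the degree spectrum is narrow. My plan would be to meet this obstacle by decomposing $G$ into a $2$-connected, minimum-degree-$\ge 3$ core with appended trees and subdivided chains, and proving a strengthened inductive statement for the core (injective sums together with a prescribed parity or residue of slack) so that the appended parts can always be fitted around it. Making this induction go through for \emph{every} such core is precisely the step I do not expect to be able to complete in full generality, which is consistent with the present status of the conjecture; the realistic outcome of this plan is a proof for each structured subclass — leaf-rich, dense, and (near-)regular — rather than a uniform resolution.
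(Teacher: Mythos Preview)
The statement you are attempting to prove is Conjecture~\ref{H-Ringel}, the Hartsfield--Ringel conjecture. The paper does \emph{not} prove it; it quotes it as an open problem, says explicitly that ``it is widely open in general,'' and then proves only the special case in which every vertex of degree at least three is a support vertex (Theorem~\ref{Antimagic no bald}). There is therefore no ``paper's own proof'' to compare your proposal against.

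Your write-up is not a proof either, and you say so yourself: you outline a programme (peel off leaves and degree-$2$ chains, use the polynomial method in the dense regime, orientation/flow arguments in the regular regime, and a greedy spanning-tree assignment in the sparse irregular regime), and then you identify exactly the obstruction that keeps the conjecture open --- sparse, almost-regular, leaf-free, large-girth graphs --- and concede that the inductive strengthening you would need for the $2$-connected core is the step you ``do not expect to be able to complete.'' That is an honest assessment of the landscape, but it is a research plan, not a proof. The concrete gap is the one you name: none of your three engines (polynomial method, orientation method, greedy-with-free-edge) covers, say, a long subdivision of a $3$-regular graph with girth tending to infinity, and your proposed induction on a strengthened hypothesis for the core is a hope rather than an argument. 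Until that middle regime is handled, the conjecture stands, which is precisely why the paper states it as a conjecture and restricts attention to the leaf-rich subclass where the free-pendant-edge mechanism actually works.
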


This conjecture has received much attention (see \cite{G}), but it is
widely open in general. 
Here we deal with some variations of antimagic labelings, on the one hand by considering different kind of sets of labels and, on the other hand, by considering vertex products instead of vertex sums. 

Let $\mathbb{R}^{+}= (0, \infty)$ and let $G=(V,E)$ be a graph of size $m$.
Recall that the vertex sum induced by an $L$-labeling $\phi$ of $G$ is $\phi_+(u)=\sum_{uv\in E}\phi(uv)$. 
An  $L$-labeling $\phi$ of $G$ is \emph{antimagic} if the induced vertex sum $\phi_+:V\mapsto \mathbb{R}$ is injective. 
We say that $G$ is \emph{universal antimagic} if  $G$ has an antimagic  $L$-labeling for every set $L \subseteq \mathbb{R}^{+}$ of size $m$;
$G$ is \emph{arithmetic antimagic} if   $G$ has an antimagic  $L$-labeling for every arithmetic sequence $L\subseteq \mathbb{R}^{+}$  of length $m$; and
$G$ is \emph{antimagic} if $G$ has an antimagic $L$-labeling for $L=\{1,2,\dots,m\}.$ 
%%%

Similar concepts can be defined for vertex products. Let $\mathbb {R}_{\circ}= [1, \infty).$ 
The \emph{vertex product} induced by $\phi$ at vertex $u$ is $\phi_{\circ}(u)=\prod_{uv\in E}\phi(uv)$. 
An  $L$-labeling $\phi$ is \emph{product antimagic}  if the induced vertex product $\phi_{\circ}:V\mapsto \mathbb{R}$ is injective. We say that a graph $G$ of size $m$ is \emph{universal product antimagic} if  $G$ has a product antimagic $L$-labeling for every set $L \subseteq \mathbb {R}_{\circ}$ of size $m$;
$G$ is  \emph{arithmetic product antimagic} if $G$ has a product antimagic $L$-labeling for every arithmetic sequence $L \subseteq \mathbb {R}_{\circ}$ of length $m$; and
$G$ is \emph{product antimagic} if $G$ has a {product antimagic} $L$-labeling for $L=\{1,2,\dots,m\}.$ 

Notice that if a graph $G$ is universal or arithmetic (product) antimagic, then $G$ is (product) antimagic.

Product antimagic labelings were introduced in 2000 by Figueroa-Centeno et al. and the following conjecture was posed.
%%%
\begin{conjecture}[\cite{FIM}]\label{ProdAntConj}
Every connected graph of size at least three is product antimagic.
\end{conjecture}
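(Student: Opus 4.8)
The plan is to convert the multiplicative statement into an additive one and then attack the additive target with number-theoretic and structural tools. Observe first that for any bijection $\phi\colon E\to\{1,2,\dots,m\}$ one has $\ln\phi_{\circ}(u)=\sum_{uv\in E}\ln\phi(uv)$, so $\phi$ is product antimagic if and only if the composition $\psi=\ln\circ\,\phi$, which is a bijection from $E$ onto the fixed set $\Lambda_m=\{\ln 1,\ln 2,\dots,\ln m\}=\{0,\ln 2,\dots,\ln m\}$, is an antimagic $\Lambda_m$-labeling in the additive sense. Thus Conjecture~\ref{ProdAntConj} is equivalent to the assertion that every connected graph of size $m\ge 3$ admits an antimagic $\Lambda_m$-labeling. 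Because $\Lambda_m$ is one single, highly non-arithmetic set that moreover contains $0$, this is neither implied by nor implies Conjecture~\ref{H-Ringel}: it is a genuinely separate additive target. The role of the label $0=\ln 1$ is the multiplicative \emph{transparency} of the edge receiving the label $1$, which contributes a factor $1$ and hence never changes the product at either of its endpoints.

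Second, I would exploit this transparency together with the construction already established in this paper. Since every connected graph of size $m\ge 3$ in which each vertex of degree at least three is a support vertex is, by the main result proved here, product antimagic, the outstanding task is to settle the connected graphs that possess a vertex of degree at least three which is \emph{not} a support vertex. For such a graph I would place the transparent label $1$ on a carefully chosen edge so as to neutralise one contribution at a problematic high-degree vertex, distribute $\{2,\dots,m\}$ on the remaining edges by an induction on $m$ that reduces to a smaller, better-structured instance, and then correct any residual collisions by reattaching the stripped pendant edges. The delicate point here is that the transparent edge may force a vertex product equal to $1$ or equal to a neighbour's product, so its location must be chosen to avoid creating a new collision rather than merely removing an old one.

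Third, to separate the vertices that the inductive step cannot distinguish, I would use prime labels. By Bertrand's postulate there is a prime $p$ with $m/2<p\le m$; the label $p$ lies on a unique edge, and its two endpoints are the only vertices whose product is divisible by $p$. Iterating with the largest primes not exceeding $m$ yields a supply of independent ``coordinates'' that isolate chosen pairs of troublesome vertices, after which the remaining composite labels are distributed along a breadth-first ordering so that the vertex products become strictly increasing, hence pairwise distinct. The standard prime-gap estimates then control how many simultaneous separations this mechanism can guarantee.

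The main obstacle is the same barrier that keeps both Conjecture~\ref{ProdAntConj} and Conjecture~\ref{H-Ringel} open: graphs that are near-regular, pendant-free, and carry no vertex of exploitable degree, where neither the support-vertex construction nor any natural vertex ordering forces the products to be monotone. Prime isolation helps only while there are enough large primes to mark all vertices individually, roughly $\Theta(m/\ln m)$ of them, and this falls short precisely for the sparse, unstructured instances, while for the additive formulation the antimagic property for the fixed logarithmic set $\Lambda_m$ is in full generality as hard as the unresolved additive conjecture. I therefore expect that closing these cases uniformly will require a global averaging or probabilistic argument rather than the local, edge-by-edge reassignments that already suffice for the sparse and support-structured graphs.
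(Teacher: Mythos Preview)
The statement you are attempting to prove is Conjecture~\ref{ProdAntConj}, which is an \emph{open conjecture} cited from \cite{FIM}; the paper does not prove it and contains no proof for you to be compared against. The paper's contribution is precisely the partial result you invoke in your second step (Theorem~\ref{Antimagic no bald}): graphs in which every vertex of degree at least three is a support vertex are product antimagic. Everything beyond that class remains open.

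Your proposal is not a proof but a research outline, and you acknowledge this yourself in the final paragraph: the prime-isolation and transparency ideas do not close the near-regular, pendant-free cases, and you ``expect that closing these cases uniformly will require a global averaging or probabilistic argument.'' That is an honest assessment, but it means there is no proof here. A few concrete gaps even in the partial steps: the reduction to an antimagic $\Lambda_m$-labeling is correct but buys nothing, since antimagicness for the fixed logarithmic set is exactly as hard as the original problem; the induction in your second step (``distribute $\{2,\dots,m\}$ on the remaining edges by an induction on $m$'') has no stated inductive hypothesis and no mechanism for ensuring the smaller instance is connected or better-structured; and the prime-isolation idea separates only $O(m/\ln m)$ vertices, which is far short of the $n$ vertices you need to distinguish in a sparse graph with $m=\Theta(n)$. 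In short, the proposal correctly identifies why the conjecture is hard but does not overcome the obstacle.
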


In this paper we focus on a class of graphs with pendant edges, that can also have many vertices of degree two.

Antimagic trees are examples of antimagic graphs with pendant edges. One of the best known results for trees,
is that any tree having at most one vertex of degree two is antimagic (\cite{KLR,LWZ}). Known families of antimagic trees with possibly many vertices of degree two are paths \cite{HR}, subdivision of trees without vertices of degree two \cite{LWZ}, spiders \cite{Shang}, double spiders \cite{CCLP2}, caterpillars \cite{DL1,LMS,LMST}, Fibonacci trees and binomial trees \cite{SSh}, and trees whose even-degree vertices induce a path \cite{LWZ,LMST2}.

Excluding the already mentioned examples of antimagic trees, few is known about antimagicness of graphs with pendant edges or with many vertices of degree two. Examples of antimagic graphs with possibly many vertices of degree two  and without pendant edges are cycles \cite{HR}, graphs that admit a $C_p$-factor, where $p$ is a prime number \cite{HST} and the subdivision of a regular graph \cite{DL2}.
Antimagic graphs with pendant edges and possibly many vertices of degree two are generalized coronas and snowflakes  \cite{DIMP}.

Known product antimagic graphs with pendant edges or with many vertices of degree two are paths, 2-regular graphs \cite{FIM}, the disjoint union of cycles and paths where each path has at least three edges \cite{KLR2}, connected graphs with $n$ vertices and $m$ edges where $m > 4n\ln n$ \cite{KLR2}, caterpillars \cite{WG}. In \cite{P} was given a characterization of all large graphs that are product antimagic. 

The main results of this paper are that connected graphs of size at least three are %$\oplus$-antimagic 
universal antimagic and universal product antimagic if every interior vertex is a support vertex  (Section~\ref{section:examples}), 
and are arithmetic antimagic and arithmetic product antimagic if every vertex of degree at least three is a support vertex (Section~\ref{section:arith ant}).
These results generalize those obtained by Lozano et al. \cite{LMST}  
and Wang and Gao \cite{WG}, 
 who proved that caterpillars are antimagic and product antimagic, respectively, and reinforces the veracity of Conjectures \ref{H-Ringel} and \ref{ProdAntConj}.

We finish this section by setting some terminology.
In what follows, all graphs considered are simple. The set of vertices and the set of edges of a graph $G$ are denoted by $V(G)$ and $E(G)$, respectively. The \emph{degree} of $v \in V(G),$ denoted by $\deg_G(v),$ is the number of edges incident with $v$. We write simply $V$, $E$ and $\deg(v)$ when the underlying graph is understood. 
A \emph{leaf} is a vertex of degree one
and a \emph{pendant edge} is an edge incident with a leaf.
The number of leaves in $G$ will be denoted by $\ell(G)$. 
An \emph{interior} vertex is a vertex of degree greater than one. 
The set of interior vertices of $G$ is denoted by $V_I(G)$.
A \emph{support} vertex is 
a vertex with at least one pendant edge incident with it.
The set of support vertices in $G$ is denoted by $V_s(G)$. 
For undefined terms we refer the reader to \cite{CHL, West}.

Given a graph $G$, a {\em leafy graph} of $G$  is a graph obtained by adding a non-negative number of pendant edges to each interior vertex of $G$. The set of leafy graphs of $G$ will be denoted by $\mathcal{H}(G).$ Notice that $G \in \mathcal{H}(G).$

The following definition will be useful later.
\begin{definition}
When constructing a labeling $\phi$ of $G$,
we say that a vertex $v \in V(G)$ is {\em almost saturated} if all but one edges incident with $v$ have been labeled; $\phi^*_{+}(v)$ (resp. $\phi^*_{\circ}(v)$) denotes the sum (resp. product) of the labels of the already labeled edges incident with $v$, and $V^*(G)$ denotes the currently set of vertices of degree at least three almost saturated  in $G$.
A vertex $v \in V$ is {\em saturated} when all the edges incident with $v$ have been labeled.
\end{definition}

\section{Universal antimagic and product antimagic graphs}\label{section:examples}

We use the notion of weighted antimagic graph introduced in \cite{MZ1} and its generalization by considering vertex products,  to prove the main results of this section.

Let $G = (V,E)$ be a graph of size $m$.
We say that $G$ is {\em weighted universal antimagic} if for any vertex weight function $w: V \rightarrow \mathbb {R}^+$ and every set 
$L \subseteq \mathbb{R}^{+}$ of size $m$ there is a bijection $\phi: E \rightarrow L$ such that $w(u)+\phi_+(u) \neq w(v)+\phi_+(v)$, for any two distinct vertices $u$ and $v.$ 
Similarly, $G$ is {\em weighted universal product antimagic} if for any vertex weight function $w: V \rightarrow \mathbb {R}_{\circ}$ and every set $L \subseteq \mathbb {R}_{\circ}$ of size $m,$  there is a bijection $\phi: E \rightarrow L$ such that $w(u)\phi_{\circ}(u) \neq w(v)\phi_{\circ}(v)$, for any two distinct vertices $u$ and $v.$ 

Matamala and Zamora prove the following results in  \cite{MZ1}.

\begin{proposition}\label{P and C U-A} \cite{MZ1}
Paths and cycles of size at least three are universal antimagic.
\end{proposition}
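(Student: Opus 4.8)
The plan is to give a constructive proof, dealing with the path case first and reducing cycles to it, with the weighted notion introduced just above serving as the inductive engine. Fix a path $v_0 v_1 \cdots v_m$ with edges $e_i = v_{i-1}v_i$ and a label set $L = \{a_1 < a_2 < \cdots < a_m\} \subseteq \mathbb{R}^+$. The vertex sums are $\phi_+(v_0) = \phi(e_1)$, $\phi_+(v_m) = \phi(e_m)$, and $\phi_+(v_i) = \phi(e_i) + \phi(e_{i+1})$ at each interior vertex. The structural fact I would exploit is that the two leaves carry a single label while every interior sum is a sum of two positive labels. Consequently, if the two pendant edges $e_1, e_m$ receive the two smallest labels $a_1, a_2$, then every interior sum is at least $a_1 + a_3 > a_2$, so both leaf sums lie strictly below all interior sums and are themselves distinct. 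This pins down the leaves for free and reduces the task to making the $m-1$ interior sums pairwise distinct.

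The next step is to recognize that, after this assignment, the interior sums $\phi_+(v_1), \ldots, \phi_+(v_{m-1})$ are exactly the vertex sums of the shorter path $v_1 \cdots v_{m-1}$ once the contributions $a_1$ and $a_2$ of the pendant edges are recorded as a weight on its two end vertices. Thus a universal antimagic $L$-labeling of the path is manufactured from a weighted antimagic labeling of the shorter path with label set $\{a_3, \ldots, a_m\}$. Accordingly I would prove the stronger statement that paths are \emph{weighted universal antimagic} by induction on the size $m$: the base cases $m \leq 3$ are checked directly (for $m=3$, setting $\phi(e_1), \phi(e_2), \phi(e_3) = a_1, a_3, a_2$ already yields the distinct values $a_1 < a_2 < a_1+a_3 < a_2+a_3$), and the inductive step peels a pendant edge and absorbs its label into the weight of the adjacent vertex, exactly as above.

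For a cycle $C_m = v_1 \cdots v_m v_1$ I would delete one edge to obtain a path on the same vertex set, assign that deleted edge a label $c$, and record $c$ as an added weight on each of its two endpoints; applying the weighted path result to the remaining $m-1$ edges then produces pairwise distinct sums on all of $C_m$. The base case $C_3$ is immediate, since the three pairwise sums $a_1+a_2 < a_1+a_3 < a_2+a_3$ of three distinct labels are always distinct.

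The delicate point, and the main obstacle, is the interaction between the peeled leaf (or, for the cycle, the endpoints of the deleted edge) and the sums produced by the induction: the leaf contributes a single value of the form $w(v)+c$, and one must guarantee that this value avoids the finitely many sums already fixed. For adversarial $L$ this cannot be arranged by any fixed placement rule; for instance, with $L = \{1,2,3,5\}$ both the increasing-order labeling and its obvious local repairs create a coincidence, so the labeling genuinely has to adapt to $L$. The induction must therefore carry a hypothesis strong enough to dodge one extra prescribed value, equivalently enough freedom in the choice of the peeled label $c$ among the $m$ candidates, to push the exceptional leaf value to an extreme and clear it of the remaining sums. Making this avoidance precise while preserving injectivity of the other sums is exactly where the real work of the argument lies, and it is what motivates passing to the weighted formulation in the first place.
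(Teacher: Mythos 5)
Your opening observations are sound (placing $a_1,a_2$ on the pendant edges does isolate the leaf sums, and the interior sums are exactly the weighted sums of the shorter path), but the strengthened inductive hypothesis you build the whole argument on --- that paths are \emph{weighted} universal antimagic --- is false, so the induction cannot be carried out. Concretely, take the path $v_0v_1v_2v_3$ of size $3$ with edges $e_i=v_{i-1}v_i$, label set $L=\{1,2,3\}$ and weights $w(v_0)=3$, $w(v_1)=1$, $w(v_2)=1$, $w(v_3)=4$. Writing $(\phi(e_1),\phi(e_2),\phi(e_3))=(x,y,z)$, the weighted sums are $3+x$, $1+x+y$, $1+y+z$, $4+z$, and every one of the six bijections collides: $(1,2,3)$ gives $(4,4,6,7)$; $(1,3,2)$ gives $(4,5,6,6)$; $(2,1,3)$ gives $(5,4,5,7)$; $(2,3,1)$ gives $(5,6,5,5)$; $(3,1,2)$ gives $(6,5,4,6)$; $(3,2,1)$ gives $(6,6,4,5)$. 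So even your base case $m=3$ fails in the weighted setting --- your direct check of the assignment $(a_1,a_3,a_2)$ only verifies the zero-weight instance --- and the same example shows that no amount of ``freedom in the choice of the peeled label $c$'' can rescue the peeling step against arbitrary weights: for this $w$ and $L$ there is \emph{no} good labeling at all. This is consistent with how the paper itself deploys the weighted notion: weighted universal antimagicness is invoked only for graphs with dense spanning subgraphs (Proposition~\ref{W U Spanning}, Theorem~\ref{Bipartite leafy} for spanning $K_{m,n}$), never for paths or cycles, which are claimed only to be universal antimagic in Proposition~\ref{P and C U-A}.

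Beyond the false strengthening, the proposal is incomplete by its own admission: your last paragraph concedes that making the avoidance of the extra leaf value $w(v)+c$ precise ``is exactly where the real work of the argument lies,'' i.e., the mechanism that would make the induction close is never supplied. Note also that the paper does not prove this proposition at all --- it cites Matamala and Zamora \cite{MZ1}, whose argument is a direct adaptive construction along the path or cycle rather than an induction through unrestricted weight functions. A repair along your lines would have to replace ``weighted universal antimagic'' by a much more restricted hypothesis in which the admissible weights are tied to the labels (for instance, end weights bounded by the two smallest labels, as in the one situation your reduction actually produces), since for arbitrary positive weights the statement you propose to induct on is simply untrue.
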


\begin{proposition}\label{W U Spanning} \cite{MZ1}.
  Let $H$ be a spanning subgraph of a graph $G.$ If $H$ is weighted universal antimagic, then $G$ is weighted universal antimagic.
\end{proposition}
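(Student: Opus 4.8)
The plan is to reduce the statement for $G$ to the hypothesis on $H$ by \emph{absorbing} the labels placed on the extra edges of $G$ into a modified weight function. Write $V = V(H) = V(G)$ (since $H$ is spanning), let $m = |E(G)|$ and $m' = |E(H)|$, and set $F = E(G) \setminus E(H)$, so that $|F| = m - m'$.

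First I would fix an arbitrary vertex weight function $w \colon V \to \mathbb{R}^+$ and an arbitrary set $L \subseteq \mathbb{R}^+$ with $|L| = m$. Choose any subset $L_0 \subseteq L$ with $|L_0| = m - m'$ and any bijection $\phi_0 \colon F \to L_0$; the remaining labels $L' = L \setminus L_0$ then form a subset of $\mathbb{R}^+$ of size exactly $m'$. The point is that the partial labeling $\phi_0$ already contributes a fixed amount to each vertex sum coming from the edges of $F$, and this contribution can be hidden inside a new weight.

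Next I would define $w' \colon V \to \mathbb{R}^+$ by $w'(v) = w(v) + \sum_{e \in F,\, v \in e} \phi_0(e)$, i.e. the old weight plus the sum of the $\phi_0$-labels on the edges of $F$ incident with $v$. Since $w(v) > 0$ and every label is positive, we have $w'(v) > 0$, so $w'$ is a legitimate weight function into $\mathbb{R}^+$. Because $H$ is weighted universal antimagic and $L' \subseteq \mathbb{R}^+$ has the correct size $m' = |E(H)|$, applying the hypothesis to the pair $(w', L')$ yields a bijection $\psi \colon E(H) \to L'$ satisfying $w'(u) + \psi_+(u) \neq w'(v) + \psi_+(v)$ for all distinct $u, v$, where $\psi_+$ denotes vertex sums taken in $H$.

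Finally I would glue the two labelings together: define $\phi \colon E(G) \to L$ by $\phi = \phi_0$ on $F$ and $\phi = \psi$ on $E(H)$. This is a bijection onto $L_0 \cup L' = L$. For each vertex $v$, splitting the edges incident with $v$ into those in $F$ and those in $E(H)$ gives $\phi_+(v) = \sum_{e \in F,\, v \in e} \phi_0(e) + \psi_+(v)$, whence $w(v) + \phi_+(v) = w'(v) + \psi_+(v)$. The injectivity guaranteed for $w' + \psi_+$ therefore transfers verbatim to $w + \phi_+$, proving that $G$ is weighted universal antimagic. The argument is a clean reduction rather than a computation, and there is no genuine obstacle: the only point requiring care is that the auxiliary data $w'$ and $L'$ still meet the domain requirements ($w'$ strictly positive, $L'$ a size-$m'$ subset of $\mathbb{R}^+$), and this is precisely why the \emph{weighted} and \emph{universal} strength of the hypothesis is indispensable—ordinary antimagicness of $H$ would not suffice, since both the effective weights and the available label set are dictated to us by the choice of labeling on $F$.
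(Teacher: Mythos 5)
Your proof is correct: the label-absorption reduction (labeling the extra edges $E(G)\setminus E(H)$ arbitrarily, folding their contribution into a modified weight function $w'$, and invoking the hypothesis on $H$ with the leftover labels) is exactly the standard argument for this statement, and you correctly check the one delicate point, namely that $w'$ and $L'$ still satisfy the domain requirements. Note that the paper itself does not prove this proposition---it is quoted from \cite{MZ1}---but your argument coincides with the technique the paper uses in its proof of Proposition~\ref{w-universal}, where pendant-edge labels are likewise absorbed into a vertex weight function before applying a weighted antimagic labeling of the core graph.
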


The following two results are obtained straightforward by replacing sums with products in the 
procedures given in \cite{MZ1} to prove Propositions~\ref{P and C U-A}  and~\ref{W U Spanning}.

\begin{proposition}\label{P and C U-A-prod}
Paths and cycles of size at least three are universal product antimagic.
\end{proposition}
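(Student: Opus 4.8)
The plan is to transport the Matamala--Zamora constructions underlying Proposition~\ref{P and C U-A} from the additive to the multiplicative world by means of the logarithm, which turns vertex products into vertex sums. Fix a path or cycle $G$ of size $m\ge 3$ and a set $L\subseteq\mathbb{R}_\circ=[1,\infty)$ with $|L|=m$. For any bijection $\phi:E\to L$ set $\psi=\log\circ\,\phi$, a bijection from $E$ onto $\log L=\{\log\ell:\ell\in L\}$. Since $\phi_\circ(u)=\exp\!\bigl(\psi_+(u)\bigr)$ and $\exp$ is injective, $\phi$ is product antimagic exactly when $\psi$ is antimagic in the additive sense. Hence it suffices to produce, for the label set $\log L\subseteq[0,\infty)$, a bijection with pairwise distinct vertex sums, and to exponentiate it back.

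If $1\notin L$ then $\log L\subseteq(0,\infty)=\mathbb{R}^{+}$, and Proposition~\ref{P and C U-A} applies verbatim to $\log L$, so $\exp\circ\,\psi$ is the required labeling of $G$. The remaining case $1\in L$ is the genuinely new one: now $0\in\log L$, so $\log L$ is only contained in $[0,\infty)$ and Proposition~\ref{P and C U-A} cannot be invoked as a black box. Here I would re-run the procedure directly in multiplicative form. That procedure sorts the labels and assigns them to the edges of the path (or around the cycle) by a fixed pattern, proving distinctness through comparisons of the induced vertex values; each comparison rests on the monotonicity rule $x<y\Rightarrow x+z<y+z$, whose multiplicative analogue $x<y,\ z>0\Rightarrow xz<yz$ holds because every label is positive (indeed $\ge 1$). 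Since the correctness is order-driven rather than tied to the specific operation, the same assignment that separates the vertex sums separates the vertex products.

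The step I expect to need the most care—and the reason the case $1\in L$ is not subsumed by Proposition~\ref{P and C U-A}—is that the multiplicative identity $1$ plays the role of the additive identity $0$, which is \emph{excluded} from $\mathbb{R}^{+}$ in the additive statement but \emph{included} in $\mathbb{R}_\circ$ here. This surfaces precisely in the comparisons where the two structures diverge: on a path, separating the two leaves (each carrying a single label $a$) from the interior vertices (carrying a two-label value $bc$), and on a cycle, handling the value $\phi_\circ$ across the closing edge, where a naive sorted arrangement can fail and a cleverer placement is required. Additively one must rule out coincidences $a=b+c$ and $a+d=b+c$; multiplicatively these become $a=bc$ and $ad=bc$. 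A two-factor product dominates a single factor, $bc\ge\max\{b,c\}$, in the same qualitative way a two-term sum does, \emph{except} that equality $bc=\max\{b,c\}$ can occur when one factor equals the identity $1$. The crux is therefore to place the label $1$ so that it creates no such collision (for instance on a pendant edge, where it yields the unique minimum leaf value), and to verify that each leaf-versus-interior and closing comparison survives the passage $+\mapsto\times$. Once this is checked, all vertex products are distinct, establishing the claim for both paths and cycles of size at least three.
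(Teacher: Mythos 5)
Your proposal is correct, but it is organized differently from the paper. The paper proves Proposition~\ref{P and C U-A-prod} in one stroke: it re-runs the Matamala--Zamora constructions for paths and cycles verbatim with sums replaced by products, which is sound because all labels lie in $\mathbb{R}_\circ=[1,\infty)$, so every order comparison in those constructions (each of the form ``multiply both sides by a positive label'') survives the passage $+\mapsto\times$. Your logarithm transport is a genuinely different and slicker route for the case $1\notin L$: it reduces the multiplicative statement to Proposition~\ref{P and C U-A} as a black box, with no need to reopen the MZ procedure at all. Its limitation is exactly the one you identify --- $1\in L$ puts $0\in\log L$, outside $\mathbb{R}^+$ --- and there you fall back on the paper's own method, the direct multiplicative re-run. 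Note, though, that once you have verified that re-run for $1\in L$, the same verification covers every $L\subseteq[1,\infty)$, so the case split and the log detour become optional elegance rather than a saving; the paper's uniform substitution avoids the split entirely. Your flagged pitfall, that $bc=\max\{b,c\}$ can hold when a factor equals the identity $1$, is the right thing to isolate (it is precisely why the product setting restricts labels to $[1,\infty)$ rather than $(0,\infty)$, and why strictness must be rechecked), but it is harmless in the MZ constructions: the potentially degenerate comparisons there pit a single label against a product involving a \emph{different} edge's label, and since $\phi$ is a bijection the two sides remain distinct --- e.g.\ if the edge labeled $1$ meets an interior vertex whose other edge carries $b$, the vertex product collapses to $b$, which can collide only with the value of a vertex carrying the label $b$ itself, and no such second vertex exists under the sorted up--down assignment. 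So your plan closes, and placing $1$ on a pendant edge (for paths) or adjacent to the seam of smallest labels (for cycles) is exactly the ``cleverer placement'' the substituted construction already makes.
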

\begin{proposition}\label{W$ U Spanning}
Let $H$ be a spanning subgraph of a graph $G.$ If $H$ is weighted universal product antimagic, then $G$ is weighted universal product antimagic.
\end{proposition}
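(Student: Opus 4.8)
The plan is to mirror the additive argument behind Proposition~\ref{W U Spanning}, replacing sums by products throughout and exploiting that the range $\mathbb{R}_\circ=[1,\infty)$ is closed under multiplication. Write $m=|E(G)|$ and $k=|E(H)|$, and let $F=E(G)\setminus E(H)$ be the set of the $m-k$ edges of $G$ that do not belong to the spanning subgraph $H$. Since $H$ is spanning, $V(H)=V(G)$, so any weight function on $V(G)$ is also a weight function on $V(H)$. Fix an arbitrary weight function $w:V(G)\to\mathbb{R}_\circ$ and an arbitrary label set $L\subseteq\mathbb{R}_\circ$ of size $m$; the goal is to produce a bijection $\phi:E(G)\to L$ with $w(u)\phi_\circ(u)\neq w(v)\phi_\circ(v)$ for all distinct $u,v$.

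First I would partition $L$ into two disjoint subsets $L=L_F\cup L_H$ with $|L_F|=m-k$ and $|L_H|=k$, and use the labels of $L_F$ to assign, in an arbitrary bijective way, a label to each edge of $F$. For each vertex $u$ let $p(u)=\prod_{uv\in F}\phi(uv)$ be the product of the labels placed on the edges of $F$ incident with $u$, taking the empty product to be $1$ when no edge of $F$ meets $u$. Because every label lies in $[1,\infty)$, each $p(u)\geq 1$, and therefore $w'(u)=w(u)\,p(u)$ again takes values in $\mathbb{R}_\circ$. This closure is the one place where the hypotheses $w(u)\geq 1$ and $L\subseteq\mathbb{R}_\circ$ are essential, and it is the analogue of the observation that a sum of nonnegative terms stays nonnegative in the additive setting.

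Next I would invoke the hypothesis that $H$ is weighted universal product antimagic: applied to the weight function $w'$ and the label set $L_H$ of size $k=|E(H)|$, it yields a bijection $\psi:E(H)\to L_H$ such that $w'(u)\psi_\circ(u)\neq w'(v)\psi_\circ(v)$ for all distinct $u,v$, where $\psi_\circ(u)=\prod_{uv\in E(H)}\psi(uv)$. Defining $\phi$ to agree with the arbitrary labeling on $F$ and with $\psi$ on $E(H)$ produces a bijection $\phi:E(G)\to L$, and since the edges of $G$ incident with $u$ split into those in $E(H)$ and those in $F$, its vertex product factors as $\phi_\circ(u)=\psi_\circ(u)\,p(u)$. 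Hence $w(u)\phi_\circ(u)=w(u)p(u)\psi_\circ(u)=w'(u)\psi_\circ(u)$ for every $u$, and the right-hand values are pairwise distinct by the choice of $\psi$. Therefore $w(u)\phi_\circ(u)\neq w(v)\phi_\circ(v)$ whenever $u\neq v$, which is exactly what is required, so $G$ is weighted universal product antimagic.

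The argument is essentially a bookkeeping reduction, so I do not anticipate a deep obstacle; the only point demanding care is the verification that $w'$ remains a legitimate weight function, that is, that it maps into $\mathbb{R}_\circ$ rather than merely into $\mathbb{R}^{+}$. This is guaranteed by multiplicativity together with the restriction to $[1,\infty)$, and it is precisely the feature that allows the product version to go through verbatim from the additive one.
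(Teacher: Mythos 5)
Your proof is correct and is exactly the argument the paper has in mind: the paper obtains this proposition by ``replacing sums with products'' in Matamala and Zamora's proof of Proposition~\ref{W U Spanning}, which is precisely your reduction of labeling $E(G)\setminus E(H)$ first, absorbing those labels multiplicatively into a new weight function $w'$, and invoking the hypothesis on $H$ with $w'$ and the remaining labels. Your explicit check that $w'$ stays in $\mathbb{R}_\circ=[1,\infty)$ (using $w(u)\ge 1$ and labels $\ge 1$) is the one point the paper leaves implicit, and you handle it correctly.
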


Next, we give some results for leafy graphs by means of weighted universal (product) antimagic labelings.

\begin{proposition}\label{w-universal}
If $G$ is a weighted universal (product) antimagic graph, then any leafy graph of $G$ is universal (product) antimagic.
\end{proposition}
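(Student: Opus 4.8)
The plan is to prove the sum version in detail and to observe that the product version is entirely analogous. Fix a weighted universal antimagic graph $G=(V,E)$ of size $m$ and a leafy graph $H\in\mathcal{H}(G)$ obtained by attaching $k\ge 0$ pendant edges to interior vertices of $G$, so that $H$ has size $m+k$. Given an arbitrary $L'\subseteq\mathbb{R}^+$ with $|L'|=m+k$, the goal is to build an antimagic $L'$-labeling of $H$. First I would partition $L'$ into the set $P$ of its $k$ smallest elements, reserved for the new pendant edges, and the set $L$ of its $m$ largest elements, reserved for the edges of $G$; set $a=\max P<\min L=b$. I would then assign $P$ bijectively to the pendant edges in an arbitrary way, which fixes, for each $u\in V$, the value $w(u)$ given by the sum of the labels on the pendant edges attached to $u$ (so $w(u)=0$ for leaves of $G$ and for interior vertices receiving no pendant edge). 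To respect the positivity required in the definition of weighted universal antimagic, I would feed the shifted weight $\widetilde w(u)=w(u)+1\in\mathbb{R}^+$ together with the label set $L$ into that property, obtaining a bijection $\phi\colon E\to L$ for which the values $\widetilde w(u)+\phi_+(u)$, and hence the values $w(u)+\phi_+(u)$, are pairwise distinct.

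Next I would merge $\phi$ with the chosen assignment of $P$ into a single bijection $\psi\colon E(H)\to L'$ and examine the induced sums. For $u\in V$ one has $\psi_+(u)=\phi_+(u)+w(u)$ (the term $w(u)$ simply vanishing at leaves of $G$), so the previous step already guarantees that the vertices of $V$ receive pairwise distinct sums. Every new leaf has $H$-sum equal to the single label of its pendant edge, an element of $P$; distinct pendant edges carry distinct labels, so the new leaves get pairwise distinct sums, and because a leaf of $G$ has sum equal to a single element of $L$ while $P\cap L=\varnothing$, a new leaf can never collide with a leaf of $G$.

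The remaining, and main, point is to forbid a collision between a new leaf and an interior vertex, something the weighted property does not see. This is exactly where assigning the \emph{smallest} labels to the pendant edges pays off: a vertex interior in $G$ satisfies $\deg_G(u)\ge 2$, so it carries at least two edges of $G$ and therefore $\psi_+(u)\ge 2b>a$, whereas every new leaf has sum at most $a$; hence no interior vertex can share its sum with a new leaf. Collecting these four cases shows that $\psi_+$ is injective, so $\psi$ is an antimagic $L'$-labeling, and since $L'$ was arbitrary, $H$ is universal antimagic. The product version follows the same route: take $w(u)$ to be the product of the pendant labels at $u$ --- which lies in $\mathbb{R}_{\circ}=[1,\infty)$ automatically, so that no shift is needed --- replace the key inequality by $\psi_{\circ}(u)\ge b^2\ge b>a$ for interior $u$, and apply the weighted universal product antimagic property of $G$ instead.
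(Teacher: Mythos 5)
Your proof is correct and follows essentially the same route as the paper's: assign the smallest labels of $L'$ to the pendant edges, encode them as a vertex weight on $G$, invoke the weighted universal (product) antimagic property of $G$ on the remaining largest labels, and separate the new leaves from the vertices of $G$ by magnitude. Your $+1$ shift of the weight even tidies a small technicality the paper glosses over, since its own weight function takes the value $0$, which lies outside $\mathbb{R}^{+}=(0,\infty)$.
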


\begin{proof}
Let $\widetilde G$ be a leafy graph of $G,$ and let $H$ be the graph induced by the set of pendant edges $E(\widetilde G)\setminus E(G)$. Suppose that $\vert E(\widetilde G) \vert=m$ 
and $\vert E(H) \vert = \vert E(\widetilde G)\setminus E(G) \vert= h$.

%%%% sum
We prove first that $\widetilde G$ is universal antimagic.
Let $L=(l_1, \dots, l_m)$ be a strictly increasing sequence in $\mathbb{R}^+.$ 
Consider a bijection  $\phi_H: E(H) \rightarrow \{l_1, \dots, l_h\}$ and define a weight function in $G$, 
$w: V(G) \rightarrow \mathbb {R}^+$, as follows
\begin{equation*}\label{eq:weight}
	w(x) =
	\begin{cases*}
	 \displaystyle\sum_{xy \in E(H)} \phi_H (xy), &
	\mbox{if $x \in V(H),$}\\
	\hspace{1cm}0, & \mbox{if $x \notin V(H)$.}
	\end{cases*}
	\end{equation*}

Since $G$ is weighted universal antimagic, there exists an antimagic labeling of $G$, $\theta : E(G) \rightarrow \{l_{h+1}, \dots, l_m\}$, such that 
\begin{equation}\label{eq:theta}
w(u)+\theta_+(u)\neq w(v)+\theta_+ (v)    
\end{equation}
for any two distinct vertices $u$ and $v$ of $G$. Now we extend the labeling $\phi_H$ to a labeling $\phi$ of $\widetilde G$ by defining $\phi(e)=\theta(e)$, for every $e \in E(G)$. 
We claim that $\phi$ is an antimagic $L$-labeling of $\widetilde G.$
Indeed, by construction, $\phi:E(\widetilde G)\mapsto L$ is a bijection.
Notice that if $u\in V(G)$, then 
\begin{equation}\label{eq:phi1}
\phi_+(u)=\sum_{uu'\in E(\widetilde G)} \phi(uu')
=\sum_{uu'\in E(H)} \phi(uu')+ \sum_{uu'\in E(G)} \phi(uu')=w(u)+ \theta_+(u)> l_h    
\end{equation}
and if  $u\notin V(G)$, then $u$ is a leaf incident with a pendant edge $e_u$ of $H$ and 
\begin{equation}\label{eq:phi2}
\phi_+ (u)=\phi (e)=\phi_H(e_u)\le l_h.
\end{equation}
%\in \{ l_1,\dots ,l_h \}.$$
%is the label of the only edge of $H$ incident with $u$. 

Now let $x$ and $y$ be two distinct vertices of $\widetilde G$.
If $x,y \notin V(G)$, then $\phi_+(x)=\phi_H(e_x)\not= \phi_H (e_y)=\phi_+(y)$, since $\phi_H$ is a bijection.
If  $x \notin V(G)$ and $y \in V(G),$ then 
$\phi_{+}(x) \leq l_h < \phi_{+}(y),$ by Inequalities (2) and (3).
If  $x,y \in V(G),$ then 
\begin{equation}\label{eq:theta2}
\phi_+(x)=w(x)+ \theta_+(x)\not= w(y) + \theta_+ (y)=\phi_+ (y)   
\end{equation} 
because $\theta$ is a weighted antimagic labeling for $w$.
Hence, $\phi$ is an antimagic $L$-labeling of $\widetilde G$ and, consequently, $\widetilde G$ is universal antimagic. 

%%%%% proof product
To prove that $\widetilde G$ is universal product antimagic we proceed similarly. In that case, suppose that $L=(l_1, \dots, l_m)$ is a strictly increasing sequence in $\mathbb {R}_{\circ}$. 
Consider as before a bijection  $\phi_H: E(H) \rightarrow \{l_1, \dots, l_h\}$ and define a
weight function $w: V(G) \rightarrow \mathbb {R}_{\circ}$ in $G$ 
as follows
\begin{equation*}\label{eq:prod}
	w(x) =
	\begin{cases*}
	 \displaystyle\prod_{xy \in E(H)} \phi(xy), &
	\mbox{if $x \in V(H),$}\\
	\hspace{1cm}1, & \mbox{if $x \notin V(H)$.}
	\end{cases*}
	\end{equation*}
Then, since Expressions (\ref{eq:theta}), (\ref{eq:phi1}), (\ref{eq:phi2}) and (\ref{eq:theta2}) hold also if we replace $\phi_+$, $\theta_+$ and sums with $\phi_{\circ}$, $\theta_{\circ}$ and products, respectively, we get that $\phi$ is a product antimagic $L$-labeling of $\widetilde G$. 
Therefore, $\widetilde G$ is universal product antimagic. 
%%%%% end proof product
\end{proof}

As a corollary of Proposition \ref{w-universal} and the following known result, we obtain a family of universal antimagic graphs.

\begin{theorem}[\cite{MZ1}]\label{Bipartite leafy}
Let $m, n \geq 3$.
If $K_{m,n}$ is a spanning subgraph of a graph $G$, then
 $G$ is weighted universal antimagic.
\end{theorem}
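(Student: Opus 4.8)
The plan is to reduce the statement to the single graph $K_{m,n}$ and then prove directly that $K_{m,n}$ is weighted universal antimagic. Since $K_{m,n}$ is assumed to be a spanning subgraph of $G$, Proposition~\ref{W U Spanning} shows that it suffices to establish the weighted universal antimagic property for $H=K_{m,n}$ itself; the general $G$ then follows for free. So from now on I fix the parts $A=\{x_1,\dots,x_m\}$ and $B=\{y_1,\dots,y_n\}$ with $m,n\ge 3$, an arbitrary weight function $w\colon A\cup B\to\mathbb{R}^+$, and a set $L$ of $mn$ positive reals; the goal is a bijection $\phi\colon E(K_{m,n})\to L$ for which the $m+n$ weighted sums are pairwise distinct.

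I would encode a labeling as an $m\times n$ matrix $M$, with $M_{ij}=\phi(x_iy_j)$, whose entries are exactly the elements of $L$. Writing $r_i$ and $c_j$ for the $i$-th row sum and $j$-th column sum, the weighted sums are $R_i=w(x_i)+r_i$ and $C_j=w(y_j)+c_j$. The first useful observation is that permuting the rows of $M$ leaves every column sum $c_j$ unchanged (each column keeps the same multiset of entries), and symmetrically permuting the columns leaves every row sum $r_i$ unchanged. The second observation is that if the entries are placed in row-major order with respect to the sorted labels $l_1<\cdots<l_{mn}$, then the raw row sums are strictly increasing and so are the raw column sums.

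With this in hand I would first secure distinctness within each part. Starting from the row-major matrix, I permute the rows so that the $A$-vertices appear in order of increasing weight; since the raw row sums are already increasing, pairing increasing weights with increasing row sums forces $R_1<\cdots<R_m$, and this row permutation does not touch the column sums. I then permute the columns so that the $B$-vertices appear in order of increasing weight, which forces $C_1<\cdots<C_n$ while leaving the row sums (hence the $R_i$) untouched. After these two sorting steps the $m$ values $R_i$ are pairwise distinct and the $n$ values $C_j$ are pairwise distinct.

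What remains — and what I expect to be the main obstacle — is the cross condition $R_i\ne C_j$ for all $i,j$, since the two sorted sequences may still interleave with coincidences. Here I would exploit the remaining freedom in the construction together with the hypotheses $m,n\ge 3$, which guarantee that every vertex is incident with at least three edges and hence that each row and each column admits several independent label exchanges. Concretely, a swap of two entries lying in a common column alters exactly two row sums by $\pm\delta$ and no column sum, while a swap within a common row alters exactly two column sums and no row sum; because all labels are distinct, each such $\delta$ is nonzero. I would remove the cross coincidences one at a time by applying, at an offending pair $R_i=C_j$, a within-row swap in row $i$ involving column $j$: this moves $C_j$ off the value $R_i$ without disturbing any $R$, and having at least three columns from which to choose the partner provides enough distinct shift values $\delta$ to avoid recreating a collision with any of the finitely many other sums. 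The careful bookkeeping needed to guarantee that this tie-breaking terminates with all $m+n$ sums simultaneously distinct (for instance by ordering the repairs and tracking a suitable potential, or equivalently by selecting from the outset a base pattern whose induced sums avoid all $mn$ forbidden coincidences) is the technical heart of the argument; the reduction and the within-part distinctness are routine by comparison.
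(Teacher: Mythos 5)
Your reduction is fine, but be aware that the paper itself offers no proof of this statement to compare against: it is imported verbatim from \cite{MZ1}, so the burden of your proposal is to actually prove that $K_{m,n}$ is weighted universal antimagic. The parts you do carry out are correct: Proposition~\ref{W U Spanning} legitimately reduces the claim to $H=K_{m,n}$; the row-major filling makes the raw row sums and raw column sums strictly increasing; and sorting rows (respectively columns) to align increasing weights with increasing raw sums yields $R_1<\cdots<R_m$ and $C_1<\cdots<C_n$ without disturbing the sums of the other part. Note also that a priori separation of the two parts (all $R_i$ below all $C_j$, say) is impossible here because the weights are adversarial and unbounded, which is exactly why the cross condition is the crux.

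The genuine gap is the step you yourself defer as ``the technical heart'': eliminating the coincidences $R_i=C_j$. As described, the repair is not sound. A within-row swap between columns $j$ and $j'$ shifts \emph{two} column sums, $C_j\mapsto C_j+\delta$ and $C_{j'}\mapsto C_{j'}-\delta$; even when it moves $C_j$ off $R_i$, it can recreate a cross collision at $C_{j'}$ and, more seriously, it can destroy the within-part distinctness $C_1<\cdots<C_n$ that you just arranged (the vertex-to-column pairing is already fixed by weight order, so you cannot re-sort afterwards without changing the weighted sums). Your count of available shifts is also too optimistic: $L$ is an \emph{arbitrary} set of positive reals, in particular it may be an arithmetic progression, in which case every achievable $\delta$ is a small multiple of the common difference and distinct swaps collapse to very few distinct shift values; with $n=3$ there are only two partner columns in a given row, while the forbidden target values for $C_j$ and $C_{j'}$ together number on the order of $m+n$, so no pigeonhole argument is available. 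Finally, since each repair may create new collisions, you need a termination argument (a potential function, or an a priori choice of base pattern avoiding all $mn$ coincidences simultaneously), and none is supplied. Without this, the proposal is an honest plan plus correct preprocessing, but not a proof: the decisive step is asserted rather than established, and it is precisely the step where the argument of \cite{MZ1} does its real work.
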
 

\begin{corollary}
Let $m, n \geq 3$. If $K_{m,n}$ is a spanning subgraph of a graph $G$, then any leafy graph of $G$ is universal antimagic.
\end{corollary}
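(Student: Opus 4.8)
The plan is to obtain the corollary as a direct two-step composition of the two results immediately preceding it, since the hypotheses are tailored precisely to chain them. First I would apply Theorem~\ref{Bipartite leafy} to $G$ itself: because $K_{m,n}$ with $m,n\ge 3$ is a spanning subgraph of $G$, that theorem yields at once that $G$ is weighted universal antimagic. This is the sole place where the assumption $m,n\ge 3$ enters, and it is used exactly as stated in \cite{MZ1}, so no extra verification of the hypothesis is needed.

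Having established that $G$ is weighted universal antimagic, I would then invoke Proposition~\ref{w-universal}, whose hypothesis is now met. That proposition guarantees that every leafy graph of a weighted universal antimagic graph is universal antimagic. Consequently any $\widetilde G \in \mathcal{H}(G)$ is universal antimagic, which is precisely the desired conclusion. Note that the two results are applied to different objects in a clean sequence: Theorem~\ref{Bipartite leafy} is applied to $G$ (to produce the weighted property), and Proposition~\ref{w-universal} transports that property from $G$ to an arbitrary leafy graph built by attaching pendant edges to the interior vertices of $G$.

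Since both ingredients are already proved in the excerpt, there is essentially no obstacle to overcome; the corollary is a formal composition. The only conceptual point worth flagging is that the spanning copy of $K_{m,n}$ lives inside $G$ and is needed only to certify that $G$ is weighted universal antimagic, whereas the pendant edges that form $\widetilde G$ play no role in that step and are handled entirely by Proposition~\ref{w-universal}. Thus the proof reduces to stating: by Theorem~\ref{Bipartite leafy} the graph $G$ is weighted universal antimagic, hence by Proposition~\ref{w-universal} any leafy graph of $G$ is universal antimagic.
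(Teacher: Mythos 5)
Your proposal is correct and matches the paper exactly: the corollary is stated there as an immediate consequence of Theorem~\ref{Bipartite leafy} (which makes $G$ weighted universal antimagic) combined with Proposition~\ref{w-universal} (which transfers that property to every leafy graph of $G$), with no further argument given. Your composition of the two results, including where the hypothesis $m,n\ge 3$ is used, is precisely the intended proof.
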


Next proposition provides another family of universal (product) antimagic graphs. 

\begin{proposition}\label{Full leafy}
Let $G$ be a connected graph of size $m \geq 3.$ If every interior vertex of $G$ is a support vertex, then $G$ is universal antimagic and universal product antimagic. 
\end{proposition}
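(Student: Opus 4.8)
The plan is to build the required labelings explicitly rather than to invoke Proposition~\ref{w-universal}. That reduction is tempting but does not apply directly: $G$ need not be a leafy graph of its interior subgraph $G[V_I(G)]$, since a vertex that is interior in $G$ may be a leaf of $G[V_I(G)]$ — think of a path every vertex of which carries a pendant edge — and an arbitrary connected interior graph is not known to be weighted universal antimagic. So instead I exploit directly the hypothesis that every interior vertex is a support vertex. Fix an increasing set $L=\{l_1<\cdots<l_m\}\subseteq\mathbb{R}^{+}$, let $p=\ell(G)$ be the number of pendant edges and $q=m-p$ the number of interior edges.

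First I dispose of the case where $G$ is a star: its unique interior vertex receives the sum (resp. product) of all labels, which exceeds every individual label, so the labeling is antimagic at once. Assume now that $G$ is not a star; then a short argument shows that every interior vertex is incident with at least one interior edge, for a vertex all of whose incident edges were pendant would be isolated from the rest of $G$, forcing $G$ to be a star. I then split the labels, assigning the $p$ smallest labels $l_1,\dots,l_p$ to the pendant edges and the $q$ largest labels $l_{p+1},\dots,l_m$ to the interior edges. Each leaf then has vertex sum equal to a single label in $\{l_1,\dots,l_p\}$, so the leaves receive pairwise distinct sums, all at most $l_p$, while every interior vertex, being incident with some interior edge, has vertex sum strictly larger than $l_p$. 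This cleanly separates leaf sums from interior sums, so the only remaining collisions to kill are among interior vertices.

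Making the interior sums pairwise distinct is the crux, and here I use the almost-saturated device. For each interior vertex $v$ choose one incident pendant edge $e_v$ (possible since $v\in V_s(G)$); these $k=|V_I(G)|$ \emph{private} edges are distinct, as each pendant edge has a single non-leaf endpoint. I label all non-private edges first, reserving for the $e_v$ the $k$ largest pendant labels $l_{p-k+1},\dots,l_p$, so that at this stage every interior vertex is almost saturated with partial sum $\phi^*_{+}(v)>l_p$. Sorting the interior vertices as $v_{(1)},\dots,v_{(k)}$ so that $\phi^*_{+}(v_{(1)})\le\cdots\le\phi^*_{+}(v_{(k)})$ and then assigning $\phi(e_{v_{(i)}})=l_{p-k+i}$, I obtain $\phi_{+}(v_{(i)})=\phi^*_{+}(v_{(i)})+l_{p-k+i}$, a strictly increasing sequence because a non-decreasing quantity is added to a strictly increasing one. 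Hence the interior sums are distinct, and combined with the separation above, $\phi$ is an antimagic $L$-labeling.

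For the product version the same construction works verbatim over $L\subseteq\mathbb{R}_{\circ}$: with sums replaced by products, each leaf product is a single label at most $l_p$, each interior product is at least $l_{p+1}>l_p$, and the sorted matching gives $\phi_{\circ}(v_{(i)})=\phi^*_{\circ}(v_{(i)})\,l_{p-k+i}$, which is again strictly increasing since multiplying a non-decreasing positive quantity by a strictly increasing positive one is strictly increasing. The main obstacle throughout is precisely the separation and ordering of the interior-vertex values; the point is that for an arbitrary label set $L$ no single large label can be forced to dominate a vertex sum, so the guarantee of distinctness must come from reserving one private pendant edge per interior vertex as its \emph{last} labeled edge and matching these edges to the partial sums in sorted order.
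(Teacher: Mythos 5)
Your proof is correct and follows essentially the same strategy as the paper's: assign the small labels to pendant edges and the large labels to interior edges, hold back one private pendant edge per interior (support) vertex, and label these last with a strictly increasing block of labels after sorting the almost-saturated vertices by partial sum (resp.\ partial product), which separates leaf values from interior values and makes the interior values pairwise distinct. The only cosmetic differences are that you reserve the \emph{largest} pendant labels $l_{p-k+1},\dots,l_p$ for the private edges whereas the paper uses the smallest ones $l_1,\dots,l_{n_s}$, and you treat the star case explicitly rather than absorbing it into the general estimate.
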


\begin{proof}
%%% sum
Let $(l_1, l_2, \cdots , l_m)$ be a strictly increasing  sequence in $\mathbb {R}^{+}$. Suppose $G$ has $n_{\ell}$ leaves and $n_s$ support vertices. Let $\phi $ be a labeling that arbitrarily assigns labels in $[l_{n_s+1},l_{n_{\ell}}]$ to the all but one pendant edges incident with each vertex in $V_I(G)$ and arbitrarily assign a label in the (possibly empty) set $[l_{n_{\ell}+1},l_m]$ to non-pendant edges. 
Let $(v_1,v_2,...,v_{n_s})$ be the order of the elements of $V_I(G)$ so that {$\phi^*_{+}(v_i) \leq \phi^*_{+}(v_{i+1})$}. Let $p_i$ be the unlabeled pendant edge incident with $v_i$. For $1 \leq i \leq n_s$, label  $p_i$ with $l_i$. It is clear that the resulting labeling of $G$ is antimagic, since the vertex sums induced by the labeling at the leaves are  $l_1,\dots,l_{n_{\ell}}$
and, by construction, the sums at interior vertices are greater than $l_{n_{\ell}}$ and pairwise distinct.

%%%n product 
Now let $(l_1, l_2, \cdots , l_m )$ be a strictly increasing sequence in $\mathbb {R}_{\circ}$. If we proceed as before but considering the order $(v_1,v_2,\dots ,v_{n_s})$ of the vertices 
in $V_I(G)$ so that {$\phi^*_{\circ}(v_i) \leq \phi^*_{\circ}(v_{i+1})$}, we obtain a product antimagic labeling. 

Hence, $G$ is universal antimagic and universal product antimagic.
\end{proof}

The {\em corona} $G_1 \circ G_2$ of two graphs $G_1$ and $G_2$ is the graph obtained by taking one copy of $G_1$  and $\vert V(G_1)\vert$ copies of $G_2$, and then joining the ith vertex of $G_1$ to every vertex of the ith copy of $G_2$.
It was proved in \cite{KLR2} that if $G_1$ is a graph without isolated vertices and $G_2$ is a regular graph, then $G_1 \circ G_2$ is product antimagic. 
Note that taking $G_2$ as 
an empty graph
%an empty graph, 
is a particular case of Proposition \ref{Full leafy}. 

\section{Arithmetic antimagic and product antimagic graphs}\label{section:arith ant}

In this section we show that the general idea used in \cite{LMST} to prove that caterpillars are antimagic can be adapted to produce an arithmetic (product) antimagic labeling of any graph 
such that every vertex of degree at least three is a support vertex (notice that caterpillars satisfy this condition). 

\begin{theorem}\label{Antimagic no bald}
Let $G$ be a connected graph of size at least three. If every vertex of degree at least three is a support vertex, then $G$ is arithmetic antimagic and arithmetic product antimagic.
\end{theorem}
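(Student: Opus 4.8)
The plan is to exploit the arithmetic structure to linearize the vertex sums and then separate vertices degree by degree. Fix an increasing arithmetic sequence $L=(l_1,\dots,l_m)$ with $l_i=a+(i-1)d$ and $a,d>0$. If the labeling $\phi$ puts on the edge $e$ the label of index $c(e)\in\{0,1,\dots,m-1\}$, where $c$ is a bijection onto $\{0,\dots,m-1\}$, then $\phi_+(u)=a\deg(u)+d\,\sigma(u)$ with $\sigma(u)=\sum_{e\ni u}c(e)$. Thus, for the \emph{given} pair $(a,d)$, producing an antimagic $L$-labeling amounts to choosing $c$ so that the pairs $(\deg u,\sigma(u))$ avoid the finitely many collisions $a(\deg u-\deg v)=d(\sigma(v)-\sigma(u))$. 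I would first dispose of the base cases in which $G$ is a path, a cycle, or a star $K_{1,m}$: paths and cycles are already universal antimagic by Proposition~\ref{P and C U-A}, and for the star any bijection works, since the centre sum dominates while the leaf sums are the distinct single labels. Henceforth I assume $G$ is none of these, so that it has a branch vertex and, being connected and not a star, every support vertex carries at least one pendant edge \emph{and} at least one non-pendant edge.

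Next I would split the vertices into three types and treat them with complementary tools. The \emph{support vertices}, which by hypothesis include every vertex of degree at least three, are handled with the device from the proof of Proposition~\ref{Full leafy}: reserve one pendant edge as a \emph{controller} at each support vertex, label everything else first, then sort the support vertices by their partial sum $\phi^*_+$ and assign the reserved labels in increasing order to the controllers, so that the totals become strictly increasing and hence distinct. The genuinely new difficulty is the set of \emph{degree-two vertices that are not support vertices}: these have no controller and lie on paths and cycles of non-pendant edges whose endpoints are support/branch vertices. To separate them I would invoke the universal antimagic property of paths and cycles together with its weighted refinement at the interface with the branch vertices (Propositions~\ref{P and C U-A} and~\ref{W U Spanning}): allocate to the edges of each such chain a block of consecutive-index labels and use the path/cycle labeling to make the internal sums distinct, treating the contribution coming from the branch endpoints as a vertex weight.

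The main work, and the place where the idea of \cite{LMST} is adapted, is the global bookkeeping that glues these pieces so that no two sums coincide \emph{across} the three types. Here the arithmetic (consecutive-index) structure is essential: since the sum of $k$ consecutive-index edges occupies a predictable interval, I would allocate disjoint index blocks to the controller and leaf edges, to the chain edges, and to the remaining non-pendant edges, arranging that the resulting sums fall into separated bands---leaf sums lowest, then the degree-two chain sums, with the sorted support-vertex sums placed to clear them---so that the linear expressions $a\deg(u)+d\sigma(u)$ cannot collide for the given $(a,d)$. The hard part will be tuning the block boundaries so that every potential cross-degree collision $a(\deg u-\deg v)=d(\sigma(v)-\sigma(u))$ is ruled out by the sign pattern of the degree and $\sigma$ differences; simple examples show that no single index bijection can be degree-monotone in general, which is precisely why the statement is \emph{arithmetic} rather than universal, and why the placement must be made relative to the given ratio $a/d$.

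Finally, the product statement would follow by the same scheme with sums replaced by products, using the multiplicative analogue of the path/cycle machinery (Proposition~\ref{P and C U-A-prod} and its weighted spanning-subgraph version) and the hypothesis that every label is at least one: the ``sort and multiply by increasing controllers'' step again yields strictly increasing, hence distinct, vertex products, and the banding argument transfers verbatim once the additive weights are read multiplicatively.
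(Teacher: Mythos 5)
There is a genuine gap, and it sits exactly where you flag ``the hard part'': the static banding scheme cannot be made to work, and no mechanism replacing it is supplied. Your plan allocates disjoint index blocks to (i) controller/leaf edges, (ii) chain edges, (iii) remaining non-pendant edges, hoping the resulting vertex sums fall into separated bands with the support-vertex sums ``placed to clear'' the degree-two chain sums. But the non-pendant edges incident with a support vertex \emph{are} chain edges (every chain of degree-two vertices ends at a support/branch vertex), so a degree-three support vertex has sum (two chain-block labels) $+$ (one small controller label), while an interior degree-two vertex has sum (two chain-block labels). Since the chain-block labels at a support vertex can be much smaller than those at some degree-two vertex elsewhere, the two families of sums occupy overlapping ranges under \emph{any} partition of indices into blocks; no tuning of block boundaries separates them. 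The paper's proof does not attempt such a separation: its key device is the alternating assignment of small labels $L_s$ and big labels $L_b$ along a Good Path Decomposition of the pruned forest and along Eulerian circuits of the even part, combined with the dynamic \textbf{Condition $Q(w)$}, which inserts the reserved pendant-edge label at a support vertex at the precise moment its partial sum drops below the current biggest used label of $L_b$. This \emph{interleaves} the support vertices among the degree-two vertices so that all vertices saturated in Step~3 receive globally sorted sums (Claim~\ref{Sat claim}, in particular Cases~2--4), while the support vertices left to Step~4 form a provably higher band (via Claim~\ref{Almost sat. sum} and the bound $\phi_{\oplus}^*(v)>l_m-d$). Your ``sort the controllers at the end'' step separates support vertices from each other, but not from the degree-two vertices; that cross-type comparison is the whole difficulty, and it is resolved in the paper only by the online interleaving, not by band placement.

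A secondary but concrete error: you invoke a ``weighted refinement'' of the universal antimagicness of paths and cycles to absorb the branch-endpoint contributions as vertex weights. The paper contains no such statement (Proposition~\ref{W U Spanning} only \emph{transfers} the weighted property from a spanning subgraph that already has it), and paths are in fact not weighted universal antimagic: for $P_3$ with labels $\{1,2\}$ and weights $2,1,2$ on the three vertices, both bijections produce a collision between the centre and a leaf. So the degree-two chains cannot be handled as independent weighted path problems; and even if each chain were internally antimagic on its own block, sums from different blocks may coincide across chains, which your sketch also leaves open. Finally, in the product case your claim that the argument ``transfers verbatim'' overlooks the exceptional behaviour of the label $1\in L$ (an edge labeled $1$ contributes nothing to a vertex product), which the paper must treat separately via the exception in Algorithm~2 and the reduction to $G-u$; some multiplicative steps also need genuine inequalities such as $(l+d)(l'-d)\ge l\,l'$ rather than a formal substitution of products for sums.
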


To prove this theorem, we provide a labeling algorithm in Section~\ref{sec:algo} and show its correctness in Section~\ref{sec:correct}. To make the procedure clearer, some examples of the labeling produced by the algorithm are given in Section~\ref{sec:examples}. In Section~\ref{sec:concluding}, some concluding remarks are given.

Since the procedure is the same for antimagic labelings and for product antimagic labelings, we simply write $\oplus$-antimagic labeling, $\phi_{\oplus}(u)$ and $\mathbb{R}_{\oplus}$,
where $\oplus \in \{+,\circ\}$. So, we have to read antimagic labeling, $\phi_{+}(u)$ and $\mathbb{R}^+$, respectively,
when $\oplus =+$, and product antimagic labeling, $\phi_{\circ}(u)$ and $\mathbb{R}_{\circ}$, respectively,
when  $\oplus =\circ$.

\subsection{The labeling algorithm}\label{sec:algo}

%%%%% 
We introduce first some terminology. The set of 
vertices of degree at least three is denoted by $V_3(G)$; 
and the set of support vertices such that 
all but one of their neighbors are leaves, by $V_s'(G)$. 
The set of vertices in $V_s'(G)$ that have degree at least three is denoted $V_{s,3}'(G)$.  
We will simply write $V_3,$ $V_s',$ and $V_{s,3}'$ if the graph $G$ is clear from the context.
A {\em star} is a tree of order at least three with exactly one interior vertex called the {\em center} of the star.
%%%%%%

\begin{definition}
Given a graph $G$, a {\em pruned} graph of $G$ is a graph that results by 
deleting from each vertex $v\in V_s'$ all but one leaves adjacent to $v$ and deleting all leaves adjacent to any other support vertex not in $V_s'$. Exceptionally, the pruned graph of $K_2$ is the same graph $K_2$ and if $G$ is a star, then a pruned graph of $G$ is a path of order three. 
\end{definition}
\begin{figure}[ht]
    \centering    \includegraphics[scale=.7]{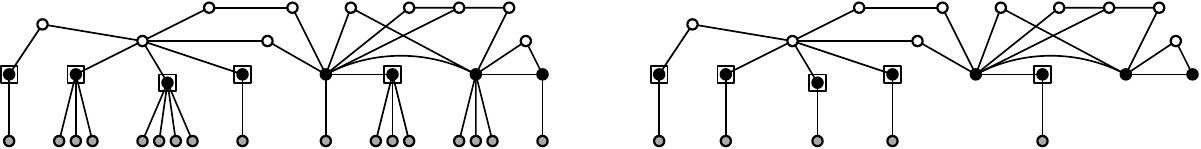}    \caption{Left, a graph $G$. In black, the vertices in $V_s$. Squared black are the vertices in $V_s'$. Right, the pruned graph of $G$: all leaves have been deleted, except one leaf hanging from vertices in $V_s'$. Notice that  Theorem~\ref{Antimagic no bald} does not apply to $G$ because some vertices of degree at least 3 are not support vertices.}
\label{fig:pruned}
\end{figure}

In Figure~\ref{fig:pruned}, an example of a pruned graph is depicted.
Notice that all pruned graphs of a graph $G$ are isomorphic. Moreover, a graph and its pruned graph have exactly the same set of interior vertices.

\begin{definition}\label{GPC}
Given a tree $T$ and $v \in V(T)$, a \em{Good Path Decomposition (GPD for short) of $T$ centered at $v$} is a collection of paths $\{P_1, \dots, P_r\}$ satisfying the following conditions:

\begin{itemize}
\item $P_1, \dots, P_r$ is a decomposition of $T$.
\item $v$ is an end-vertex of $P_1.$
\item  For $ 1 \leq i \leq r$, one end-vertex of $P_i$ is a leaf of $T$. 
\item For $ 2 \leq j \leq r$, there exists $i < j$ such that one end-vertex of $P_j$ is a vertex of $P_i.$
\end{itemize}
\end{definition}

Notice that  for any given  tree $T$ and $v \in V(T)$, a GPD of $T$ centered at $v$ can be constructed using Depth First Search Algorithm (DFS-Algorithm for short). 
In Figure~\ref{fig:GPD}, an example of a GPD is given.

\begin{figure}[ht]
    \centering    \includegraphics[scale=.7]{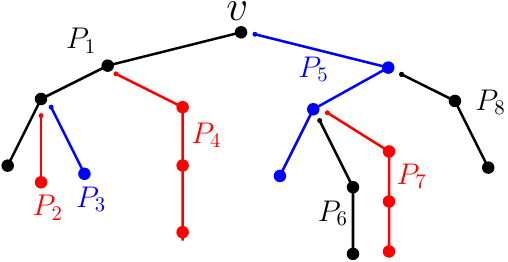}    \caption{A GPD of a tree $T$ into 8 paths centered at $v$ obtained by applying DFS Algorithm.
    Each edge of $T$ belongs to exactly one of the 8 paths.}
\label{fig:GPD}
\end{figure}

Let $G$ be a connected graph of size $m \geq 3$ 
such that $V_3\subseteq V_s$, $|V_I|\ge 2$ and $|V_3|\ge 1$. Let $L=(l_1,l_2,\dots , l_m)\subseteq \mathbb {R}$ be an arithmetic sequence.

We proceed to describe a four-step labeling procedure that provides an $L$-labeling $\phi:E(G)\mapsto L$ that is antimagic, if $L\subseteq \mathbb{R}^+$, and product antimagic, if $L\subseteq \mathbb{R}_{\circ}$. For the sake of clarity we also include a pseudocode.
\medskip

{\bf STEP 1.}
{\sc Labeling almost all pendant edges of $G$.} 
\medskip

For each vertex $v\in V_3$, if $v\in V_{s,3}'$, then we label all but two pendant edges incident with $v$ using the smallest unused labels; if $v\notin V_{s,3}'$, then we label all but one pendant edges incident with $v$ using the smallest unused labels (see Algorithm 1, lines 1--10 and Figure~\ref{fig:examplesteps}(a,b)).
\medskip

{\bf STEP 2.}
%{\bf STEP 2.1.}
{\sc Labeling almost all pendant edges of $G_1$.}
\medskip

Let $\mathcal{P}(G)$ be the pruned graph of $G$ containing no labeled edge.  Consider a partition $\{ E_1, E_2\}$ of the edges of $\mathcal{P}(G)$, such that the subgraph $G_1$ induced by $E_1$ is a forest and the subgraph $G_2$ induced by $E_2$ is an even graph (see Figure~\ref{fig:examplesteps}(c,d)), where $E_1$ or $E_2$ (but no both) can be the empty set. Notice that this partition is always possible, since we can successively remove from the graph a set of edges of a cycle until we get a forest, and the removed edges induce an even graph.   
\medskip

Now,
we label almost all pendant edges of $G_1$ using the smallest unused labels as follows. Indeed, let $C$ be a component of $G_1$.

If $C$ is a star, then label all pendant edges of $C$ except two.
In any other case, for each vertex $v$ in $V_3(C)$,
label all but one pendant edges incident with $v$, if $v\in V_{s,3}'(C)$; and 
 label all pendant edges incident with $v$, if $v\notin V_{s,3}'(C)$  
(see Algorithm 1, lines 12--29).
\medskip
\begin{figure}
    \centering    \includegraphics[width=0.95\textwidth]{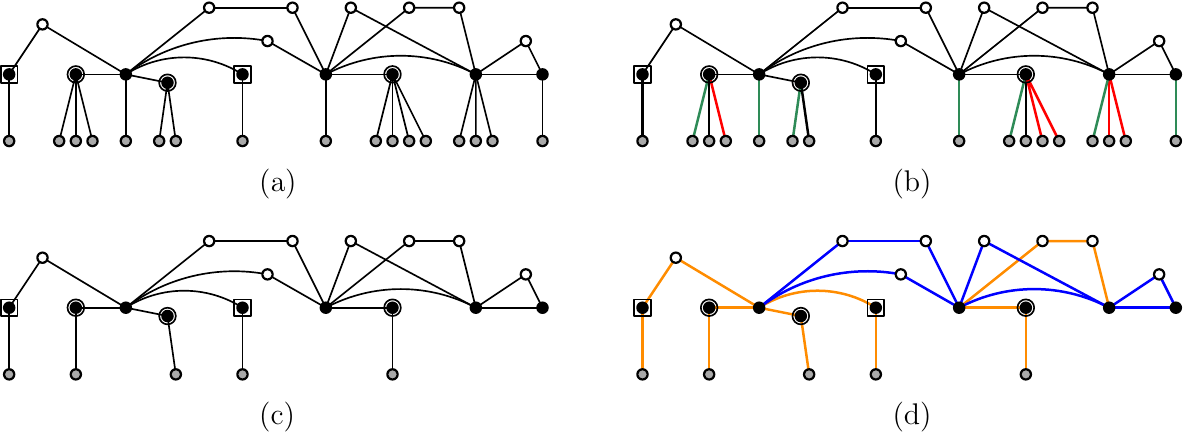}    \caption{(a) A graph $G$ such that $V_3\subseteq V_s$. Vertices in $V_s$ are in black. Squared and circled black vertices are those in $V_s'$ and circled vertices are the vertices in $V_{s,3}'$. 
    (b) Red edges are labeled in Step 1. Green edges will be labelled in Step 3 or 4.
    (c) The pruned graph $\mathcal{P}(G)$.
    (d) $G_1$ and $G_2$ are the subgraphs of $\mathcal{P}(G)$ induced by orange edges and blue edges, respectively. In this case, $G_1$ is a forest with two components and $G_2$ is a connected even graph. The edges of $G_1$ and $G_2$ are labeled in Step 3. No edge is labeled in Step 2 in this example.}
\label{fig:examplesteps}
\end{figure}

{\bf STEP 3.} 
{\sc Labeling all $S$-edges and possibly some pendant edges of $G$.}
\medskip

%{\bf STEP 2.2.}
Let $(C_{1},C_{2},\dots,C_{t_1})$ and $(C'_{1},C'_{2},\dots,C'_{t_2})$ be the components of $G_1$ and $G_2$, respectively. Let $\mathcal{P}(C_i)$
be a pruned graph of $C_i$ containing no labeled edge.
For $1 \leq i \leq t_1,$ 
let $(P_{i,1},P_{i,2},\dots,P_{i,n_i})$ be a 
GPD of $\mathcal{P}(C_i)$ centered at a vertex $v_i\in V(\mathcal{P}(C_i))$ chosen as follows.
If $E(G_2)\not= \emptyset$, choose $v_i$ be such that 
$v_i\in V(\mathcal{P}(C_i))\cap V(G_2)$. 
If $E(G_2)=\emptyset$, then $t_1=1$ and we choose 
$v_1 \in V_3(G).$ Notice that this is possible, since we assumed $G$ is not a path 
(see Algorithm 1, lines 32--34).

We now partition the set $L$ into two sets, 
the set $L_s$ of \emph{smaller} labels and the set $L_b$ of \emph{bigger} labels,
where the the set of bigger labels will be used to label approximately half of the edges of each $P_{i,j}$ and each $C_i$.

Concretely, let
$$m_0= \sum_{i=1}^{t_1} \sum_{j=1}^{n_i} \bigg \lfloor \frac {\vert E(P_{i,j}) \vert }{2} \bigg \rfloor +  \sum_{i=1}^{t_2} \bigg \lceil  \frac {\vert E(C'_i) \vert }{2} \bigg \rceil$$ 
and
$$m_b= \left\{ \begin{array}{cc} m_0 +1 &\hbox{ if }G=G_1\hbox{ and } |E(P_{1,1})|\hbox{ is odd,  }\\ m_0& \hbox{ otherwise. }\end{array} \right.  $$

Then,  $L_b$ is the set containing  the $m_b$ biggest labels, that is, $L_b=(l_{i_b}, l_{i_b+1}, \dots,l_m)$, where $i_b=m-m_b+1$, and  $L_s=L \setminus L_b$.
Notice that $L_s$ contains the labels already used in Steps 1 and 2.

Now consider the sequence 
$$S=(P_{1,1},P_{1,2},\dots,P_{1,n_1},P_{2,1},\dots,P_{2,n_2},\dots,P_{t_1,1},\dots,P_{t_1,n_{t_1}}, C'_{1},C'_{2},\dots,C'_{t_2}).$$
An edge in the union of all subgraphs in $S$ is called an {\em $S$-edge}.
Next, following the order of the elements in $S$ we label all $S$-edges and, possibly, some pendant edges of $G$. 
We begin by ordering the edges of $H\in S$ as follows.
If $H$ is one of the paths $P_{1,1},\dots,P_{1,n_1},\dots,P_{t_1,1},\dots,P_{t_1,n_{t_1}}$, then sort the edges as they appear when going through the path beginning at a leaf in $\mathcal{P}(C_i)$ that is not the center of the GPD. 
If $H\in \{C_1',\dots,C_{t_2}'\}$, then $H$ has an Eulerian circuit. In this case, sort the edges as they appear in an Eulerian circuit but beginning and ending at a vertex $v$ of degree at least 3 in $G$. Notice that this is always possible by the assumptions made on $G$.

For $H\in S$, let $\hat{H}=(e_1,\dots ,e_n)$ be the ordered sequence of edges of $H$ described above.
Then, we label successively the edges $e_1,\dots ,e_n$ with alternate smallest unused labels from $L_s$ and $L_b$, starting with a label in $L_s$ except in the following two cases  (see Algorithm 2). 

\begin{enumerate}

\item[$\bullet$] If $G$ is a tree, $H=P_{1,1}$ and $H$ has odd size,
then we begin with a label from $L_b$ instead of $L_s$, except in the following case. If we are constructing a product antimagic $L$-labeling, and $1\in L$ and the label $1$ has not been assigned in Step 1, then begin by assigning the labels 1 and {the second smallest label} to the edges $e_1$ and $e_2,$ respectively, and continue with alternate smallest unused labels from $L_b$ and $L_s$ (see Example 2 below).
   
    \item[$\bullet$] If $H\in \{C_1',\dots,C_{t_2}'\}$ and $H$ has of odd size, then begin with a label from $L_b$ 
    and continue with alternate smallest unused labels from $L_s$ and $L_b$.
\end{enumerate}

When assigning a label from $L_s$ to an edge {$e$ in $\hat{H}$} we ask first if {\bf Condition $Q(w)$} holds at some vertex $w \in V^*(G)$.
\medskip

{\bf Condition $Q(w)$. }$ 
\phi_{\oplus}^*(w) \leq\ell _b$, where
$\phi_{\oplus}^*(w)=min \{\phi_{\oplus}^*(v): v \in V^* \}$ and $\ell_b$ is the current biggest used label in $L_b$.
\medskip

While {\bf Condition $Q(w)$} holds, then we assign first the smallest unused label to the unlabeled pendant edge incident with $w$ (see Algorithm 2, lines 13--16),  and continue by assigning the next smallest label in $L_s$ to the edge $e$ in $\hat{H}$.

\medskip
{\bf STEP 4.}
{\sc Labeling the remaining pendant edges.}
\medskip

Finally,  we properly label the final set of unlabeled pendant edges with the remaining unused labels of $L_s.$
Notice that, at this step, 
each unlabeled pendant edge is incident with an almost saturated interior vertex.
Sort the vertices in $V^*(G)$ as $(v_1,\dots,v_t)$ so that 
$\phi_{\oplus}^*(v_i) \le \phi_{\oplus}^*(v_{i+1})$, if $1\le i < t$,
and assign the smallest unused label to the unlabeled pendant edge incident with $v_i$.

\begin{algorithm}
	\caption{Arithmetic $\oplus$-antimagic labeling of a connected graph such that $V_3\subseteq V_s$}\label{algo}
	\ \\ \
	\hglue 5pt \textbf{Input:} A connected graph $G$  of size $m\ge 3$ such that $V_3\subseteq V_s$, $|V_I|\ge 2$ and $|V_3|\ge 1$\\
	\hglue 45pt An arithmetic sequence $L=(l_1,l_2,\dots,l_m) \subseteq \mathbb {R}_\oplus$\\
	\hglue 5pt \textbf{Output:} An ${\oplus}$-antimagic $L$-labeling $\phi$ of $G$
	\begin{algorithmic}[1]
		\Statex
		\Statex \hglue -4.7mm {\sc{\bf STEP 1:} Labeling almost all pendant edges of $G$}
		\vspace{0.2cm}
		\For{each vertex {$v \in V_{s,3}'(G)$}}  
		     \For{each pendant edge $h$ incident with $v,$ except two}
		      \State $\phi(h) \gets$ the smallest unused label
		   \EndFor	
		\EndFor
		\For{each vertex {$v\in V_3(G)\setminus V_{s,3}'(G)$}}
		   \For{each pendant edge $h$ incident with $v,$ except one}
		      \State $\phi(h) \gets$ the smallest unused label
		   \EndFor		
		\EndFor
		\Statex
		
		\Statex \hglue -6mm {\sc{\bf STEP 2:} Labeling almost all pendant edges of $G_1$}
		\vspace{0.2cm}
		
		\State $G^* \gets \mathcal P(G),$ in such a way that $G^*$ has no labeled edges
		\Statex Let $\{ E_1, E_2\}$ be a partition of the edges of $G^*$, such that the subgraph $G_1$ induced by $E_1$ is a forest and the subgraph $G_2$ induced by $E_2$ is an even graph
		%\Statex  Let $(G_1,G_2)$ be a decomposition of $G^*,$ where $G_1$ is a forest and $G_2$ is an even graph 
		\Statex Let $(C_{1},C_{2},\dots,C_{t_1})$ be an order of the components of $G_1$
		\For{$i=1$ to $t_1$}
		  \If {$C_i $ is a star}
		   \For{each pendant edge $h$ incident with the center of $C_i,$ except two} 
		     \State $\phi(h) \gets$ the smallest unused label
		   \EndFor
		   \Else
		       \For{each $v \in V_{s,3}'(C_i)}$
		     \For{each pendant edge $h$ in $C_i$ incident with $v,$ except one}
		      \State $\phi(h) \gets$ the smallest unused label
		   \EndFor	
		 \EndFor
		  \For{each vertex {$v \in {V_3(C_i) \setminus V_{s,3}'(C_i)}$}}
		    \For{each pendant edge $h$ in $C_i$ incident with $v$}
		      \State $\phi(h) \gets$ the smallest unused label
		   \EndFor	
		 \EndFor 		 
		\EndIf 
		\EndFor   
	    
		\Statex
		 \algstore{myalgo}
	\end{algorithmic}
\end{algorithm}
%\clearpage
\begin{algorithm}
	\ContinuedFloat
          \caption{Arithmetic $\oplus$-antimagic labeling of a connected graph
         such that $V_3\subseteq V_s$ (continued)}
          \ \\ \
		\begin{algorithmic}
			\algrestore{myalgo}
		\Statex \hglue -6mm {\sc{\bf STEP 3:} Labeling all $S$-edges and possibly some pendant edges of $G$}
		\vspace{0.2cm}

		\State $L_b \gets (l_{i_b}, \dots,l_m)$  \Comment{{\color{blue}The index $i_b$ can be previously determined}}
		\State $\ell_b \gets 0$ \Comment{{\color{blue}$\ell_b$ is the last used label in $L_b$}}
		\For{$i=1$ to $t_1$}  
		    \State $C \gets \mathcal P(C_i),$ in such a way that $C$ has no labeled edges
		    \State Find a GPD $(P_1,P_2,\dots,P_q)$ of $C$ centered at $v \in V(C).$ 
      
      If $G_2 = \emptyset,$ then $v \in V_3(C),$ otherwise we choose $v\in V(C)\cap V(G_2)$  		   
		  \For{$j=1$ to $q$} 
		    \State Find an Eulerian circuit $\hat{H}=(e_1,e_2,\dots,e_n)$ of $P_j$ such that one end-vertex in 
      
      \phantom{xxx}$e_1$  is a leaf of $C,$ and also it is not the center of the corresponding GPD. 
		    \State Apply {\bf Algorithm 2} 
      
      \Comment{{\color{blue}Label the edges in $\hat{H}$, and some pendant edges of $G$ (if needed)}}
		  \EndFor
		\EndFor
  \vspace{1mm}
  
		\Statex \phantom{x}Let $(C'_{1},C'_{2},\dots,C'_{t_2})$ be an order of the components of $G_2$ 
   \vspace{1mm}
   
		\For{$i=1$ to $t_2$}
		   \State Find an Eulerian circuit $\hat{H}=(e_1,e_2,\dots,e_n)$ of $C'_i$ such that $e_1 \cap e_n \in  V_3(G)$ 
		   \State Apply {\bf Algorithm 2} 
     
     \Comment{{\color{blue}Label the edges in $\hat{H}$, and some pendant edges of $G$ (if needed)}}
		\EndFor
		
			\Statex
		
		\Statex \hglue -6mm {\sc{\bf STEP 4:} Labeling the remaining pendant edges}
		\vspace{0.2cm}
		\State Sort the vertices in $V^*(G)$ as $(v_1,\dots,v_t)$ such that for all $i < t,$
$\phi_{\oplus}^*(v_i) \le \phi_{\oplus}^*(v_{i+1})$ 		
\Statex \hspace{0.1cm} Let $h_i$ be the unlabeled pendant edge incident with $v_i$
		\For{$i=1$ to $t$}
		   \State $\phi(h_i) \gets$ the smallest unused label 
		\EndFor
		\end{algorithmic}
\end{algorithm}

\begin{algorithm}
	\caption{Labeling the edges of $\hat{H}$ and some pendant edges of $G$ (if needed)}\label{algo2}
	\ \\ \
	\hglue 5pt \textbf{Input:} {A trail} $\hat{H}=(e_1,e_2,\dots,e_n)$\\
	\hglue 5pt \textbf{Output:} A labeling of the edges in $\hat{H}$ and some pendant edges of $G$ {(if needed)}
	\begin{algorithmic}[1]
		\Statex
		 \State first index $\gets 1$
		 \State $r \gets 0$
		\If{{(($G$ is a tree and $\hat{H}$ is a trail of the first path in the GPD of $G$) or ($\hat{H}$ is a circuit))} 
  		
			and $n \equiv 1 \pmod 2$}
		   \State $r \gets 1$
		   \State $l \gets$ the smallest unused label
		   \If {$\hat{H}$ is a trail of a path and $ \oplus = \circ$ and $l=1$} 		   	      
		          \State $\phi(e_1) \gets 1$
		          \State first index $\gets 2$
		   \EndIf 
	        \EndIf     
		\For {$i=$ first index to  $n$}
		\If {$i+r  \equiv 1 \pmod 2$}
	   	  \While {$\ell_b \geq \phi_{\oplus}^*(w)=min \{\phi_{\oplus}^*(v): v \in V^*(G) \}$}
		    %\Statex \hspace{1.6cm} Let $v \in  $V^*(G)$ such that $\phi_{\oplus}^*(v)=m_0$
		    \Statex \hspace{1.6cm} Let $h_{w}$ be the unlabeled pendant edge incident with $w$
		    \State $\phi(h_w) \gets$ the smallest unused label 
		  \EndWhile
		  \State $\phi(e_i) \gets$ the smallest unused label 
	        \Else
	          \State $\ell_b \gets$ the smallest unused label in $L_b$
		  \State $\phi(e_i) \gets \ell_b$ 
		\EndIf
		\EndFor   
		
	\end{algorithmic}
\end{algorithm}
\clearpage

\subsection{Examples}\label{sec:examples}

{\bf Example 1.}  In Figure \ref{graphex},
a graph $G$ such that $V_3\subseteq V_s$ is depicted. 
Observe that no pendant edge is labeled in Step 1.
The subgraph induced by dashed and black edges is a pruned graph $\mathcal P(G)=(G_1, G_2)$ of $G$ obtained in Step 2, where dashed edges belong to the forest $G_1$ and  black edges belong to the even graph $G_2$. 

\begin{figure}[H]
	\centering
	\includegraphics[scale=.7]{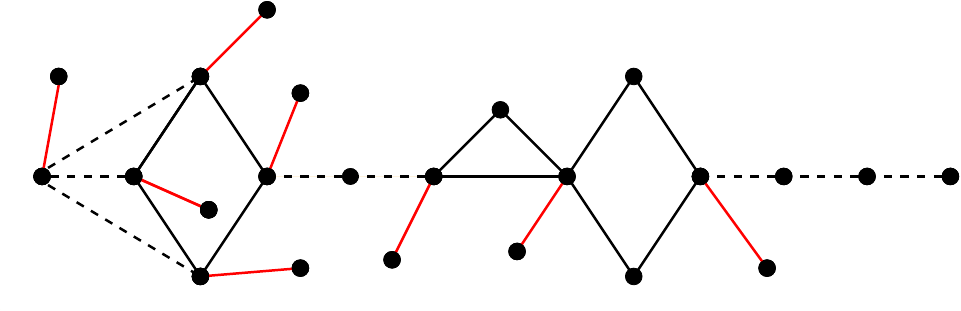} 
	\caption{A graph such that $V_3\subseteq V_s$.}
	\label{graphex}
\end{figure}

The rest of the labeling process is showed in Figure \ref{antlabgraph}. 
Label 1 is assigned in Step 2, because the corresponding edge belongs to a star of $G_1$ (see lines 13--16 in Algorithm 1). The labels of the $S$-edges, colored in blue and green, are assigned in Step 3 by using Algorithm 2. The  labels of red pendant edges are assigned in Step 4. The blue-green subgraph $G_1$ has three components (each of them is a tree); and each component has a GPD centered at the circled vertices $v_1, v_2$ and $v_3,$ respectively. The blue-green subgraph $G_2$ has two components, say $C'_1$ and $C'_2$ (each of them is an even graph). The  Eulerian circuit of $C'_1$ begins (and ends) at $v_1$ and the Eulerian circuit of $C'_2$ begins at $v_3.$ Notice that the dark-red pendant edge with label 7 has been labeled in Step 3 (because {\bf Condition $Q(w)$} holds just before assigning label 8 to a green $S$-edge).

\begin{figure}[H]
	\centering
	\includegraphics[scale=.7]{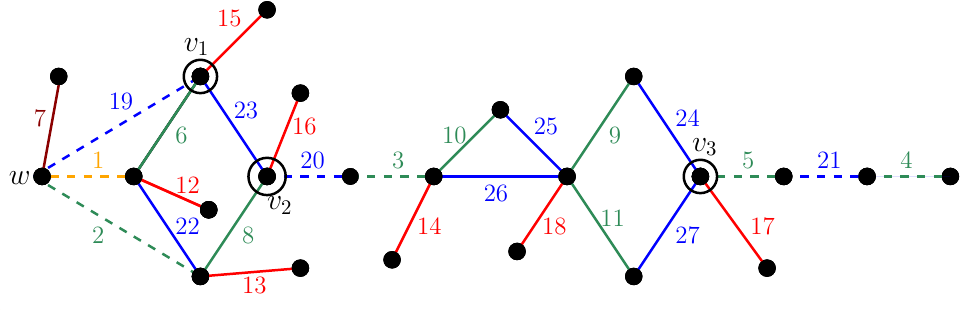} 
	\caption{An antimagic labeling of a graph such that $V_3\subseteq V_s$.}
	\label{antlabgraph}
\end{figure}

{\bf Example 2.} In Figure \ref{antlabtree} we have a product antimagic labeling of a tree $T.$ The circled vertex $v_1$ is the center of the given GPD of $T.$ The $S$-edges, colored in blue and green, are labeled in Step 3, and the red pendant edges are labeled in Step 4. 
Notice that this is an example of the only exception that can appear when applying the algorithm to produce a product antimagic labeling  (see Algorithm 2, lines 6--9).

\begin{figure}[H]
	\centering
	\includegraphics[scale=.7]{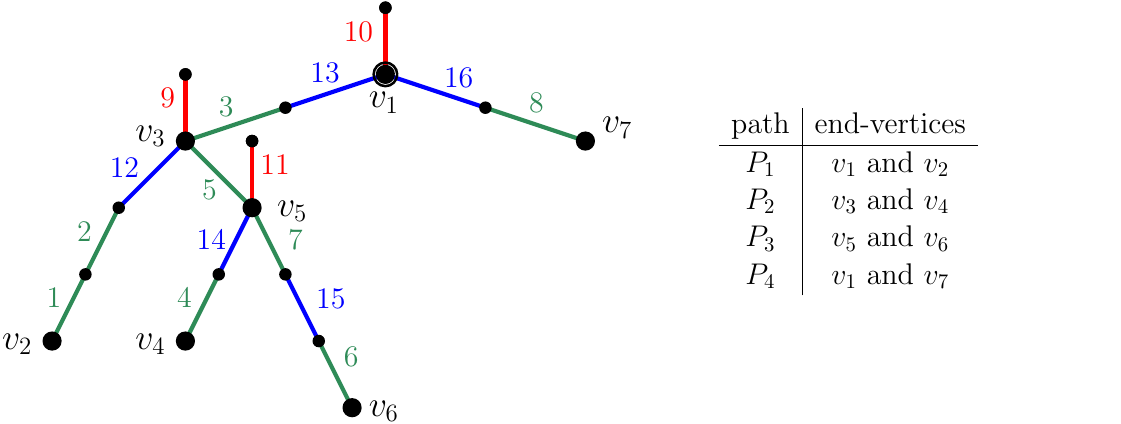}
 %{ProdTree.pdf} 
	\caption{A product antimagic $L$-labeling of a tree $T$ of size 16 
 such that $V_3\subseteq V_s$. The GPD centered at $v_1$ obtained by applying DFS to $\mathcal{P}(T)$ is $(P_1,P_2,P_3,P_4)$.}
	\label{antlabtree}
\end{figure}

\subsection{Proof of Theorem~\ref{Antimagic no bald}}\label{sec:correct}

To prove Theorem~\ref{Antimagic no bald} it suffices to show that 
Algorithm 1 provides an antimagic $L$-labeling, if $L\subseteq \mathbb{R}^+$, and 
a product antimagic $L$-labeling, if $L\subseteq \mathbb{R}_\circ$.

%It is sufficient to show the correctness of Algorithm 1.
Let $G$ be a connected graph 
of size $m$, $m \geq 3$, such that $V_3\subseteq V_s$. Since stars are trivially universal antimagic and universal product antimagic, we assume that $G$ has at least two internal vertices. Also, by Propositions \ref{P and C U-A} and \ref{P and C U-A-prod}, paths and cycles are universal $\oplus$-antimagic, so we may assume $V_3(G) \neq \emptyset$. 

Let $L=(l_1,l_2,\dots,l_m) \subseteq \mathbb R$ be a positive increasing arithmetic sequence of length $m$ with common difference $d$, $d > 0$. Set $L_s=L \setminus L_b=(l_1,l_2, \dots , l_{i_b-1}),$ where $L_b=(l_{i_b},\dots,l_m)$ is the set of big labels defined in Step 3. We show that the bijection $\phi : E(G) \rightarrow L$ constructed with Algorithm 1 produces an antimagic $L$-labeling, if $L\subseteq \mathbb{R}^+$, and 
a product antimagic $L$-labeling, if $L\subseteq \mathbb{R}_\circ$.

If the condition of Line 6 in Algorithm 2 holds,
let $u$ be the leaf incident with the edge with label 1. 
Thus,  $G-u$ is a tree of size $m-1$ labeled with the elements of the positive increasing sequence $(l_2, \dots , l_{m})\subseteq
\mathbb{R}_{\circ}\setminus \{1\}$. Moreover, if $\phi$ restricted to $E(G-u)$ is a product antimagic $L\setminus\{1\}$-labeling of $G-u$, then obviously $\phi$ is a product antimagic $L$-labeling of $G$. Hence, in this case we assume $G:=G-u$ for the remaining part of the proof.

We begin by proving some properties related with the labels of $S$-edges. Recall that $S$-edges are the edges of the subgraphs in the sequence $S$ 
 
 $$S=(P_{1,1},\dots, P_{1,n_1},\dots ,P_{t_1,1}\dots, P_{t_1,n_{t_1}},C_1',\dots ,C_{t_2}')$$ defined in Step 3, where $P_{i,j}$ is a path, for every $i\in \{1,\dots ,t_1\}$ and $j\in \{1,\dots ,n_{i}\}$, and $C_i'$ is an even graph for every $i\in\{1,\dots, t_2\}$ are labeled in Step 3.

\begin{claim}\label{fact1} Let $E_H=(e_1,e_2,\dots, e_n)$ for some $H \in S.$ According to the labeling process in Algorithm 2, we have
\begin{enumerate}[(1)] 
\item For every $j\in \{1,\dots, n-2\}$
%$1 \leq j \leq n-2$, 
\begin{enumerate}
\item if $\phi(e_j) \in L_b$, then $\phi(e_{j+2})=\phi(e_j)+d \in L_b$;
\item if $\phi(e_j) \in L_s$, then $\phi(e_{j+2})=\phi(e_j)+kd \in L_s$ for some $k \geq 1.$
\end{enumerate}
\item For every $j\in \{ 1,\dots ,n-1\}$,
%\leq j \leq n-1$, 
if $\phi(e_j) \in L_b$ (resp. $L_s$), then $\phi(e_{j+1}) \in L_s$ (resp. $L_b$).
\end{enumerate}
\end{claim}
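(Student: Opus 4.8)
The plan is to read both properties directly off the alternating assignment carried out in Algorithm 2, using the single invariant that governs the whole step: each time a label is requested for an odd-position $S$-edge or for a pendant edge inside the \textbf{while} loop, the smallest unused label lies in $L_s$, while the even positions are served explicitly from $L_b$. First I would dispose of the one genuine exception, namely the product case in which $\phi(e_1)=1$: this is handled by the reduction to $G-u$ already performed before the claim, after which the first path $P_{1,1}$ has even size and the clean alternation described below applies in every remaining case.

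Next I would record the parity bookkeeping. Writing $r\in\{0,1\}$ for the offset fixed at the start of Algorithm 2, the edge $e_i$ receives a label from $L_s$ exactly when $i+r$ is odd and from $L_b$ exactly when $i+r$ is even; the pendant edges labelled by the \textbf{while} loop are interleaved in the process but never belong to the sequence $E_H=(e_1,\dots,e_n)$. Since $j+r$ and $(j+1)+r$ have opposite parities, $e_j$ and $e_{j+1}$ fall in different sets, which is precisely statement~(2).

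For statement~(1) I would use that $L_s$ and $L_b$ are each arithmetic progressions of common difference $d$ and that the algorithm consumes each of them in strictly increasing order. Since $j$ and $j+2$ give $j+r$ and $(j+2)+r$ the same parity, $e_j$ and $e_{j+2}$ draw from the same set. For (1a), when $e_j\in L_b$ the only labels assigned between its $L_b$-draw and the $L_b$-draw at $e_{j+2}$ come from $L_s$ (the draw at $e_{j+1}$ together with the pendant draws of the intervening \textbf{while} loop), so no element of $L_b$ is consumed in between; the two draws are therefore consecutive in $L_b$ and $\phi(e_{j+2})=\phi(e_j)+d$. For (1b), when $e_j\in L_s$ the draw at $e_{j+2}$ is again the next unused $L_s$-label after that of $e_j$, but because the \textbf{while} loop sits on the $L_s$-side it may have labelled some pendant edges from $L_s$ in between; writing $p\ge 0$ for their number gives $\phi(e_{j+2})=\phi(e_j)+(p+1)d$, that is, $k=p+1\ge 1$. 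The asymmetry between the exact $+d$ of (1a) and the $+kd$ of (1b) is thus entirely explained by the \textbf{while} loop being attached to the odd (small) positions only.

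The step I expect to be the main obstacle is justifying the invariant that the ``smallest unused label'' requested at the odd positions and inside the \textbf{while} loop always belongs to $L_s$ (equivalently, that $L_s$ is never exhausted before these requests are met) and, dually, that $L_b$ is touched only at the even positions. This is exactly what keeps the two progressions separate and consumed in increasing order, and it rests on the choice of $m_b=|L_b|$ in Step~3 being matched to the number of ``big'' positions. I would isolate this as the one fact to verify carefully, and then present (1) and (2) as immediate consequences of the monotone consumption of the two arithmetic sequences.
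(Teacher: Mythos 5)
Your proposal is correct and follows essentially the same reasoning as the paper, whose proof of this claim is a one-line observation that the statement is a direct consequence of the alternating draws from $L_s$ and $L_b$ in Step~3 together with the possible interleaved pendant-edge draws triggered by Condition $Q(w)$. You simply make that observation explicit -- the parity bookkeeping with the offset $r$, the monotone consumption of the two arithmetic blocks (which yields the exact $+d$ in (1a) versus $+kd$ in (1b), since the \textbf{while} loop only consumes $L_s$-labels), the reduction to $G-u$ for the product-case exception, and the matching of $m_b$ to the number of big positions -- all of which is consistent with what the paper sets up before and around the claim.
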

\begin{proof}
    {It is a direct consequence of the way $S$-edges are labeled in Step 3 with alternate labels from $L_s$ and $L_b$, and taking into account that Condition $Q(w)$ possibly applies.}
\end{proof}

\begin{claim}\label{fact2} Every interior vertex of $G$ is an end-vertex of an $S$-edge with a label in $L_b.$
\end{claim}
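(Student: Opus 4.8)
The plan is to reduce the statement to the following: every interior vertex $v$ of $G$ is incident with at least one $S$-edge, and among the $S$-edges at $v$ at least one carries a label from $L_b$. The starting point is the structural fact, noted earlier, that $G$ and its pruned graph $\mathcal{P}(G)$ have exactly the same interior vertices, and that interiority is likewise preserved when a forest component is pruned to $\mathcal{P}(C_i)$. Consequently every interior vertex $v$ of $G$ has degree at least two in $\mathcal{P}(G)=G_1\cup G_2$, hence is incident with at least two $S$-edges. This makes it natural to split into the case $v\in V(G_2)$ and the case in which all $S$-edges at $v$ lie in $G_1$. The only routine ingredient used throughout is the strict alternation of $L_s$ and $L_b$ along each ordered sequence $\hat H$, which is precisely Claim~\ref{fact1}(2); since the insertions triggered by Condition $Q(w)$ only place pendant-edge labels between consecutive $S$-edges, they do not disturb this alternation.

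First I would treat $v\in V(G_2)$. As $G_2$ is even, $v$ has even degree at least two in its component $C'_i$, so in the Eulerian circuit $(e_1,\dots,e_n)$ the vertex $v$ is the common endpoint of two circuit-consecutive edges. If this pair is $e_k,e_{k+1}$ (an internal visit), Claim~\ref{fact1}(2) forces one of them into $L_b$. The only vertex needing extra care is the base vertex $v_0$ where the circuit opens and closes, whose wrap-around pair is $(e_1,e_n)$; here the parity rule for circuits settles it, since when $n$ is even the standard alternation ends with $e_n\in L_b$, and when $n$ is odd the special rule that begins the circuit in $L_b$ gives $e_1\in L_b$. Thus $v$ is always incident with an $L_b$-labeled $S$-edge.

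Next I would treat an interior vertex $v$ all of whose $S$-edges lie in $G_1$; then $v$ is interior in some $C_i$, hence in $\mathcal{P}(C_i)$, with degree at least two there. The key is the structure of the DFS-based GPD: every non-center interior vertex $w$ of $\mathcal{P}(C_i)$ is internal to the (unique) path that reaches $w$ along its parent edge and continues along one child edge, its other child edges opening new paths at $w$. Those two edges are consecutive in $\hat H$, so Claim~\ref{fact1}(2) again produces an incident edge in $L_b$. For the center $v_i$ itself, if $G_2\neq\emptyset$ then $v_i\in V(G_2)$ and it is already covered by the previous case; if $G_2=\emptyset$ then $G$ is a tree, $v_i=v_1$ is an endpoint of $P_{1,1}$, and the edge of $\hat P_{1,1}$ at $v_1$ is its last edge $e_n$. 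I would then verify $e_n\in L_b$ in both parities: for $|P_{1,1}|$ even the standard alternation starting in $L_s$ ends in $L_b$, and for $|P_{1,1}|$ odd the special starting rule begins $P_{1,1}$ in $L_b$, again ending in $L_b$ (the product-antimagic label-$1$ exception being already disposed of by the reduction to $G-u$ made at the outset of the proof). The star components fit the same scheme, since a star's pruned graph is a $P_3$ whose two edges receive $L_s$ and $L_b$, placing an $L_b$-label at the star center.

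I expect the main obstacle to be the boundary bookkeeping rather than the core idea: correctly identifying, at the center of each GPD and at the base vertex of each Eulerian circuit, which endpoint edge is under scrutiny, and checking that every combination of parity with a special starting rule still deposits a label from $L_b$ at that vertex. Some care is also needed to confirm that the leaf-ends of the GPD paths and the pruned-away vertices are leaves of $G$, never interior vertices, so that no interior vertex of $G$ escapes one of the two cases above.
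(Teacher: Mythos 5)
Your proof follows essentially the same route as the paper's: the same three-way case split (vertices of $V(G_2)$ handled via consecutive edges of the Eulerian circuit and the odd-size starting rule; non-center interior vertices of $\mathcal{P}(C_i)$ handled as internal vertices of a GPD path via the $L_s$/$L_b$ alternation of Claim~\ref{fact1}(2); and the GPD center in the tree case $E(G_2)=\emptyset$ handled via the terminal edge of $P_{1,1}$), so it is correct and matches the paper's argument. You merely make explicit the parity bookkeeping that the paper delegates to ``see Algorithm 2,'' which is a fair elaboration rather than a different approach.
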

\begin{proof}
    {Let $v$ be an interior vertex of $G$. Thus, $v$ belongs to the pruned graph $\mathcal{P}(G)$, since $v$ is not a leaf. Hence, $v$ belongs  to $G_1$ or $G_2$. 
    If $v\in V(G_2)$, then at least one of the $S$-edges incident with $v$ is labeled with a label from $L_b$ in Step 3.
    If $v\notin V(G_2)$ and  $v$ is an interior vertex of one of the paths in $\{P_{1,1},\dots, P_{1,n_1},\dots,P_{t_1,1},\dots ,P_{t_1,n_{t_1}}\}$, then at least one of the $S$-edges incident with $v$ is labeled with an element from $L_b$ in Step 3.

    In any other case,  $E(G_2)=\emptyset$ and $G$ is a tree, so that {$C_1=G_1=\mathcal{P}(G)$}, and $v$ is the center of the GPD of $C_1$.
    But then, the {$S$-edge} of $P_{1,1}$ incident with $v$ is labeled with an element from $L_b$ in Step 3 (see Algorithm 2). 
   }
\end{proof}

\begin{claim}\label{fact3} Every vertex of degree two {in $G$} is saturated in Step 3. 
\end{claim}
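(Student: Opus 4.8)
The plan is to fix a vertex $v$ of degree two in $G$ and track what happens to each of its two incident edges throughout the algorithm, showing that both receive a label no later than Step 3. The crucial preliminary observation is that a degree-two vertex is interior but does \emph{not} lie in $V_3$; hence Step 1 never labels either of its edges, and the only way an edge at $v$ could be deferred past Step 3 is if it is a pendant edge left for Step 4. So the whole point is to verify that both edges of $v$ become $S$-edges of the appropriate pruned graph (possibly after noting that a pendant edge at $v$ is also swept up as an $S$-edge rather than postponed).

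First I would show that $v$ has degree exactly two in the pruned graph $\mathcal{P}(G)$. If neither edge at $v$ is pendant, then $v \notin V_s$ and the pruning removes none of $v$'s edges. If exactly one edge is pendant, then $v$ is a support vertex with a single leaf neighbor, so $v \in V_s'$; since pruning retains one leaf at each vertex of $V_s'$ and $v$ has only that one leaf, its pendant edge survives. (The remaining possibility, that both edges are pendant, forces $G$ to be the path $P_3$, which is excluded by the standing assumptions $|V_I|\ge 2$ and $V_3\neq\emptyset$.) In every case $v$ is a degree-two vertex of $\mathcal{P}(G)$, so its two edges are distributed between the forest $G_1$ and the even graph $G_2$.

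Next comes the parity step, which I expect to do most of the work. Because $G_2$ is an even graph, $v$ has even degree in $G_2$, and since $\deg_{\mathcal{P}(G)}(v)=2$ this degree is $0$ or $2$; consequently the two edges at $v$ lie in the \emph{same} part, either both in $E_2$ or both in $E_1$. If both lie in $G_2$, they are traversed by the Eulerian circuit of the even component $C_i'$ containing $v$, hence are $S$-edges and are labeled in Step 3. If both lie in $G_1$, they belong to a single tree component $C_i$, in which $v$ again has degree two; repeating the pruning analysis inside $C_i$ shows that $v$ keeps degree two in $\mathcal{P}(C_i)$ (its unique leaf, if any, is retained because $v\in V_s'(C_i)$, and the star case $C_i=P_3$ is handled by $\mathcal{P}(P_3)=P_3$). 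Since every edge of $\mathcal{P}(C_i)$ lies on some path of its GPD, both edges at $v$ are again $S$-edges and are labeled in Step 3.

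The main obstacle is the bookkeeping in this last case: one must make sure the pendant edge at a degree-two support vertex is neither discarded by the inner pruning nor left behind as a Step-4 pendant edge, but is genuinely covered by the GPD. This is exactly what the observation $v\in V_s'(C_i)$ with a single leaf secures, since the inner pruning then keeps that leaf and the GPD decomposition of $\mathcal{P}(C_i)$ includes $v$'s pendant edge among the $S$-edges. Once this is in place, in every case both edges incident with $v$ are labeled by the end of Step 3, so $v$ is saturated in Step 3, as claimed.
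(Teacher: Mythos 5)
Your proposal is correct and follows the same route as the paper, whose proof is simply the one-line assertion that both edges incident with a degree-two vertex are $S$-edges labeled in Step 3. You merely supply the verification the paper leaves implicit (pruning preserves the degree of $v$ in $\mathcal{P}(G)$ and in $\mathcal{P}(C_i)$, evenness of $G_2$ forces both edges into the same part, and GPD paths respectively Eulerian circuits cover all such edges), and every step of that verification checks out.
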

\begin{proof}
    It follows from the fact that both edges incident with a vertex $v$ of degree two in $G$ are $S$-edges and are labeled in Step 3.
\end{proof}

\begin{claim}\label{fact4} Every edge of $G_1$ incident with a vertex of $G_2$ is labeled before any edge of $G_2$.
\end{claim}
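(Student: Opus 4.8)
The plan is to read off the claim from the single structural feature that governs the order of labeling in Step~3: in the sequence $S$ all the paths $P_{i,j}$ come before all the even components $C'_1,\dots,C'_{t_2}$, and the edges of these paths are exactly the edges of the pruned forests $\mathcal{P}(C_i)$, while $\bigcup_i E(C'_i)=E(G_2)$. Hence every edge of $G_2$ is labeled only after every $S$-edge coming from a forest component, and of course after Steps~1 and~2. Since no edge of $G_1\subseteq\mathcal{P}(G)$ is touched in Step~1 (the pruned graph is built from the unlabeled edges), it suffices to show that an arbitrary $e=uv\in E(G_1)$ with $v\in V(G_2)$ is labeled in Step~2 or in the path phase of Step~3; equivalently, that $e$ is never deferred to Step~4.

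First I would record the degrees forced by $v\in V(G_2)$: since $G_2$ is even, $\deg_{G_2}(v)\ge 2$, so $\deg_{\mathcal{P}(G)}(v)=\deg_{G_1}(v)+\deg_{G_2}(v)\ge 3$ and thus $v\in V_3(G)\subseteq V_s(G)$. Let $C_i$ be the component of $G_1$ containing $e$, and split on whether $e$ survives the pruning producing $\mathcal{P}(C_i)$. If $e\in E(\mathcal{P}(C_i))$, then $e$ lies on one of the paths $P_{i,1},\dots,P_{i,n_i}$, so it is an $S$-edge labeled in the path phase, and we are done.

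Otherwise $e$ is a pendant edge of $C_i$ removed by the pruning: one endpoint $x\in\{u,v\}$ is a leaf of $C_i$ and the other, $w$, is its support vertex in $C_i$. I would then analyse $\deg_{C_i}(w)$. If $\deg_{C_i}(w)\ge 3$, then $w\in V_3(C_i)$ is processed in Step~2: if $w\notin V_{s,3}'(C_i)$ all pendant edges at $w$ are labeled there, and if $w\in V_{s,3}'(C_i)$ all but one are, with the single exception forced to be the leaf kept by the pruning---because $\mathcal{P}(C_i)$ is chosen to contain no labeled edge, the retained leaf is exactly the pendant edge Step~2 left unlabeled. Thus that exception reappears as an $S$-edge, and in either subcase $e$ is labeled in Step~2 or the path phase. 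If $\deg_{C_i}(w)=2$, then $w$ has at most one non-leaf neighbour; so either $w\in V_s'(C_i)$ and its unique leaf-edge is retained by the pruning (hence an $S$-edge), or $w$ has two leaf-neighbours and $C_i$ is a star, whose two pendant edges both survive as the path $P_3$. The only remaining degenerate case, $C_i\cong K_2$, is immediate since then $\mathcal{P}(C_i)=C_i$. In every case $e$ is labeled before the $G_2$ phase.

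The main obstacle, and the place I would be most careful, is the bookkeeping created by the three coexisting meanings of ``leaf/pendant''---relative to $G$, to $\mathcal{P}(G)$, and to the forest component $C_i$---since a vertex of $G_2$ can appear as a leaf of $C_i$ while the pruning rules are stated in terms of $C_i$ alone. The decisive technical point is that the single pendant edge Step~2 leaves unlabeled at each vertex of $V_{s,3}'(C_i)$ must coincide with the leaf retained in $\mathcal{P}(C_i)$; this is exactly what the requirement that $\mathcal{P}(C_i)$ contain no labeled edge guarantees. Once this identification is secured, no edge of $G_1$ incident with a vertex of $G_2$ can slip past the path phase into Step~4, and the ordering of $S$ completes the proof.
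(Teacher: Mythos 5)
Your proof is correct and follows essentially the same route as the paper, whose entire argument for Claim~\ref{fact4} is the one-sentence observation that in the sequence $S$ all the GPD paths $P_{i,j}$ (whose edges exhaust the pruned forests $\mathcal{P}(C_i)$) are processed before the Eulerian circuits of the components of $G_2$. Your additional case analysis on $\deg_{C_i}(w)$ --- showing that every edge of $G_1$ removed when pruning a component $C_i$ has already been labeled in Step~2, with the retained leaf edge at each $w \in V_{s,3}'(C_i)$ forced to coincide with the one pendant edge Step~2 left unlabeled because $\mathcal{P}(C_i)$ must contain no labeled edge --- is sound and simply makes explicit the bookkeeping the paper leaves implicit in its appeal to ``the chosen order.''
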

\begin{proof}
    {It is a direct consequence of  the chosen order to label $S$-edges in Step 3.}
\end{proof}

 The following claims show some properties about the vertex sum and vertex product at saturated and almost saturated vertices.

\begin{claim}\label{Almost sat. sum}
Let $v \in V_3(G)$ and let $\ell_v$ be the biggest used label in $L_b$ just when $v$ achieves the almost saturated state. Then $\phi_{\oplus}^*(v) \geq \ell_v.$ 
\end{claim}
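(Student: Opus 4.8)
The plan is to compare $\phi_{\oplus}^*(v)$ with $\ell_v$ by first pinning down \emph{which} edge of $v$ receives a label last before almost saturation, and then locating a big label at $v$. Since the labels of $L_b$ are handed out in Step~3 as the smallest unused element of $L_b$ each time, while Steps~1,~2 and~4 use no label of $L_b$, the quantity $\ell_v$ is exactly the most recently assigned big label at the instant $v$ becomes almost saturated. I would treat the sum and the product cases together: once I exhibit labeled edges at $v$ whose $\oplus$-combination is at least $\ell_v$, positivity of the labels (for $\oplus=+$) and the hypothesis $L\subseteq\mathbb{R}_\circ$, i.e. all labels are $\ge 1$ (for $\oplus=\circ$), make the verification identical, so that $\phi_{\oplus}^*(v)$ dominates any single factor present at $v$.

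First I would establish a structural reduction: when $v\in V_3\subseteq V_s$ first becomes almost saturated, its unique unlabeled edge is a pendant edge, so \emph{all} $S$-edges at $v$ are already labeled. This rests on the fact that Step~1 always leaves at $v$ exactly one pendant edge that does not survive in the pruned graph, hence is untouched by Step~2, and that Condition $Q(w)$ and Step~4 can colour a pendant edge of $v$ only after $v$ is already almost saturated; consequently the transition to almost saturation is triggered by the \emph{last} $S$-edge incident with $v$, say $e^*$, labeled in Step~3. By Claim~\ref{fact2}, $v$ is incident with at least one $S$-edge carrying a label of $L_b$, so $\phi_{\oplus}^*(v)$ already involves a big label. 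If $\phi(e^*)\in L_b$, then $e^*$ carries the current maximum, $\phi(e^*)=\ell_v$, and the inequality is immediate; the same holds, more generally, whenever $\ell_v$ sits on an edge incident with $v$ — for instance when $v$ is an interior vertex of the path or circuit of $S$ containing $e^*$, since the big $S$-edge preceding $e^*$ in the traversal (which, by the alternation of Claim~\ref{fact1} and because Condition $Q(w)$ interposes only labels of $L_s$, carries $\ell_v$) then shares the vertex $v$ with $e^*$.

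The remaining, and main, case is that $e^*$ receives a small label while $v$ is a terminal (branch or center) vertex of its path in the GPD, so the edge carrying $\ell_v$ lies on the far side of that path and is \emph{not} incident with $v$; here the largest big label actually present at $v$ is strictly smaller than $\ell_v$. I expect this to be the crux, as it cannot be settled edge-by-edge and forces one to bound the whole accumulated value $\phi_{\oplus}^*(v)$. The plan here is quantitative: I would exploit that $L$ is an arithmetic progression and that the numbers of small and big labels used up to any moment stay nearly balanced, the excess of big over small labels being controlled by the number of odd Eulerian circuits of $G_2$ and by the single exceptional odd first path (bounded through $m_b$), with Condition $Q(w)$ itself injecting extra small labels that keep the two counts synchronised. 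Combining a big label of $v$ with the small label on $e^*$ — whose value has itself grown in step with the number of labels already assigned — I would show the $\oplus$-combination still reaches $\ell_v$, using Claim~\ref{fact4} and the chosen starting vertices of the circuits to maintain this balance across the ordering of $S$. The product case would be handled by the same counting turned multiplicative, the bound ``all labels $\ge 1$'' replacing positivity; verifying this balance carefully, especially for $\oplus=\circ$, is the step I expect to demand the most work.
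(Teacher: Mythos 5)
Your first two paragraphs reproduce the paper's easy case correctly: when the edge carrying $\ell_v$ is incident with $v$ --- in particular when $v\in V(G_2)$ (via Claim~\ref{fact4} and the Eulerian traversal), or when $v$ is interior to the trail containing the triggering edge, where the alternation of Claim~\ref{fact1} puts the current biggest big label on the preceding edge --- the bound is immediate. But the case you yourself identify as the crux ($v$ an interior vertex of $G_1$, the edge $e$ with $\phi(e)=\ell_v$ not incident with $v$) is never actually proved: your third paragraph is a plan stated in the conditional (``I would exploit\dots'', ``I would show\dots''), and the decisive inequality is missing. The paper settles this case in a few lines. Let $\tilde e$ be the edge whose labeling makes $v$ almost saturated; it must carry a small label $l=l_1+k'd\in L_s$ (otherwise $\ell_v$ sits on $\tilde e$ and we are in the easy case). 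By Claim~\ref{fact2} there is an edge $e'$ incident with $v$ with $\phi(e')\in L_b$. Since big labels are consumed consecutively in increasing order, $\ell_v-\phi(e')=kd$, where $k$ counts the big labels used after $e'$; and since at this moment the number of used big labels is at most the number of used small labels, $k\le k'$. Hence $\ell_v-\phi(e')=kd\le k'd<l_1+k'd=l$, so $\phi_{+}^*(v)\ge \phi(e')+l>\ell_v$, and multiplicatively $\phi_{\circ}^*(v)\ge \phi(e')\,l=(\ell_v-kd)(l_1+k'd)\ge(\ell_v-kd)(l_1+kd)=\ell_v\,l_1+kd\,(\phi(e')-l_1)\ge \ell_v$, using $l_1\ge 1$. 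This comparison of the gap $kd$ against the small label $l$ just placed at $v$ is the entire content of the claim, and nothing in your sketch produces it.

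Moreover, your accounting misdiagnoses where the small/big balance could break. In the crux case $v\notin V(G_2)$: all non-pendant edges of $v$ are $S$-edges of GPD paths, and Steps 1--2 leave exactly one pendant edge of $v$ unlabeled, while Condition $Q(w)$ and Step 4 label a pendant edge only at a vertex that is \emph{already} almost saturated; so the triggering assignment occurs during the path phase of $S$, before any Eulerian circuit of $G_2$ is touched. The odd circuits you invoke as a source of big-label excess are therefore irrelevant to this case. What is needed is only that each GPD path is started on a small label; the single exception --- the odd first path of a tree, which starts big and is the reason for setting $m_b=m_0+1$ --- is harmless, since a vertex interior to that path falls into the easy case, and for a branch vertex the inequality $k\le k'$ survives because $e'$ is itself one of the already used big labels, so $k\le(\hbox{number of big labels used})-1\le k'$. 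Since you neither pin down this counting nor carry out the product-case computation that you flag as the hardest step, the proposal as written does not establish the claim, even though its case split and its intended ingredients coincide with the paper's.
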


\begin{proof}
Let $e$ be the edge such that $\phi(e)=\ell_v$. 

If $e$ is incident with $v$, then the result is clear.  
Notice that this happens in particular when $v \in V(G_2).$ 
Indeed, consider an Eulerian circuit $E_H$ {of the component} of $G_2$ containing $v$.
If $v$ is the end-vertex of  $E_H$, then,   by Claim \ref{fact4}, $v$ achieves the almost saturated state when we arrive at $v$ with the last edge of $E_H$ with a label in $L_b$.
Otherwise, also by Claim \ref{fact4}, $v$ achieves the almost saturated state when we leave this vertex with an edge of $E_H$. Thus, the label of this edge or the preceding one belongs to $L_b$.

Hence, it is enough to consider now the case $v \notin  V(G_2)$ and $e$ is not incident with $v$. Thus, $v$ is an interior vertex of $G_1$. Suppose that $v$ achieves the almost saturated state just when an edge $\tilde e$ incident with $v$ receives a label $l \in L_s,$ where $l=l_1+k'd$, for some $k'\geq 0$. 
By Claim~\ref{fact2}, there exists an edge $e'$ incident with $v$ such that $\phi(e') \in L_b.$ Notice that $\phi(e') < \phi(e).$ Besides, by construction,  the number of labels from $L_b$ already used at this moment is at most the number of used labels from $L_s$.  Then $\phi(e)-\phi(e') =kd$, where $k\le k'$. Hence, $\phi(e)-\phi(e') =kd <   l_1+k'd= l,$ since $l_1>0$. %Now we consider two cases.   

Therefore, 
$$\begin{array}{rlr}
\phi_{+}^*(v)\geq&\hspace{-2mm}\phi(e')+\phi (\tilde e) = \phi(e')+l >  \phi(e)=l_v, &\hbox{ if } \oplus = +;\\

\phi_{\circ}^*(v)\geq&\hspace{-2mm} \phi(e') \, \phi (\tilde e) = \phi(e')\, l =(\phi(e)-kd)(l_1+k'd) \\ \geq&\hspace{-2mm}(\phi(e)-kd)(l_1+kd)=
\phi(e)\, l_1+kd\, (\phi(e)-l_1-kd)&
\\=&\hspace{-2mm}\phi(e)\, l_1+kd\, (\phi(e')-l_1)>   \phi(e)=l_v, &\hbox{ if } \oplus=\circ.  
\end{array}$$

\end{proof}

Now we define the following subsets of $V(G)$:
\begin{align*}
W_1&= \{ v \in V(G) : \deg(v)=1 \},\\
W_2&= \{ v \in V(G) : \deg(v)= 2 \hbox{ or } ( \deg(v) > 2 \text{ and $v$ reaches the saturated state in Step 3} ) \},\\
% W_2&= \{ v \in V(G) : \deg(v) \geq 2 \text{ and } v \text{ reaches the saturated state at Step 3} \},\\
W_3&= \{ v \in V(G) : \deg(v) > 2 \text{ and } v \text{ reaches the saturated state in Step 4} \}.
\end{align*}

Taking into account Claim \ref{fact3}, we have that $W_2$ consists of all non leaves vertices that reach the saturated state in Step 3. So, $(W_1,W_2,W_3)$ is a partition of $V(G).$

\begin{claim}
If $v \in W_2,$ then

$$\begin{array}{ll}
 \phi_{+}(v)  \leq l_m + l,&\textrm{ if } \oplus=+; \\
 \phi_{\circ}(v)  \le   l_m \, l,&\textrm{ if } \oplus=\circ.
\end{array}$$

Where $l$ is the last label in $L_s$ assigned in Step~3.
\end{claim}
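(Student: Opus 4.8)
The plan is to split according to whether $\deg(v)=2$ or $\deg(v)>2$, and in each case to isolate the two labels that control the final value at $v$: one coming from $L_b$ (bounded by $l_m$) and one coming from $L_s$ and used in Step 3 (bounded by $l$).

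First I would treat the case $\deg(v)=2$. By Claim~\ref{fact3} both edges incident with $v$ are $S$-edges labeled in Step~3, and they lie consecutively in a single ordered trail $\hat H=(e_1,\dots,e_n)$ of some $H\in S$. Indeed, if the two edges lie in $G_1$ then $v$ is interior to one GPD path (a degree-two vertex is neither a leaf of $\mathcal P(C_i)$ nor the chosen center), and if they lie in $G_2$ then $v$ is traversed exactly once by the Eulerian circuit (the circuits begin and end at a vertex of degree at least three), so the two edges appear as $e_i,e_{i+1}$. By Claim~\ref{fact1}(2) consecutive $S$-edges receive labels from different blocks, so one of them lies in $L_s$ and the other in $L_b$. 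The $L_s$-label is at most $l$ and the $L_b$-label is at most $l_m$, giving $\phi_+(v)\le l+l_m$ and, after replacing sums by products, $\phi_\circ(v)\le l\,l_m$.

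Next I would treat $\deg(v)>2$ with $v$ saturated in Step~3, and the first task is to show that the last edge of $v$ to be labeled is a pendant edge processed under Condition $Q(w)$. After Step~1 the only unlabeled edges at $v$ are its non-pendant $S$-edges together with one leftover pendant edge (if $v\in V_3\setminus V_{s,3}'$), or with two pendant edges one of which survives in $\mathcal P(G)$ as an $S$-edge (if $v\in V_{s,3}'$). All $S$-edges at $v$ are labeled in Step~3 strictly before this leftover pendant edge, so $v$ reaches the almost saturated state while still in Step~3, i.e. $v\in V^*(G)$; its remaining pendant edge can then be labeled only under Condition $Q(w)$ in Step~3 or else in Step~4, and since $v$ is saturated in Step~3 it must be the former. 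At that moment Condition $Q(w)$ gives $\phi^*_{\oplus}(v)\le \ell_b$, and as every label of $L_b$ is at most $l_m$ we get $\phi^*_{\oplus}(v)\le l_m$; the label finally assigned is the smallest unused label, which lies in $L_s$ and is used in Step~3, hence is at most $l$. Therefore $\phi_+(v)=\phi^*_+(v)+(\text{last label})\le l_m+l$, and likewise $\phi_\circ(v)=\phi^*_\circ(v)\cdot(\text{last label})\le l_m\,l$.

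The main obstacle, I expect, is the bookkeeping in the degree-$>2$ case: one must be certain that the final edge labeled at $v$ is genuinely the pendant edge governed by Condition $Q(w)$, so that the inequality $\phi^*_{\oplus}(v)\le \ell_b\le l_m$ is available. This is precisely the mechanism that keeps any high-degree vertex whose partial value already exceeds $l_m$ out of $W_2$ (pushing it into $W_3$, to be finished in Step~4), and it is what makes the uniform bound $l_m+l$ (resp. $l_m\,l$) possible despite $v$ potentially carrying many small Step-1 labels. The degree-two case is routine once the consecutiveness of the two incident $S$-edges is justified.
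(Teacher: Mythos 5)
Your proof is correct and takes essentially the same route as the paper's: for $\deg(v)=2$ you use the $L_s$/$L_b$ alternation on consecutive $S$-edges (Claim~\ref{fact1}(2)) to bound the two labels by $l$ and $l_m$, and for $\deg(v)>2$ you argue that saturation in Step~3 forces the leftover pendant edge to be labeled under Condition $Q(v)$, giving $\phi_{\oplus}^*(v)\le \ell_b\le l_m$ together with a final $L_s$-label at most $l$. The additional bookkeeping you supply (that the two edges at a degree-two vertex appear consecutively in a single trail, and that the pendant edge is necessarily the last edge labeled at $v$) merely makes explicit what the paper leaves implicit.
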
  
\begin{proof}
Recall that $l_m$ is the biggest label in $L$. Let $v \in W_2$ and  $l$ be the last label in $L_s$ assigned in Step 3. If $\deg (v)=2,$ 
then the two edges incident with $v$ are $S$-edges. So, by Claim~\ref{fact1}(2), %$\phi_{\oplus}(v) \leq l_m \oplus l.$ 

$$\begin{array}{ll}
  \phi_{+}(v) \leq l_m + l,&\hbox{ if } \oplus=+;\\ 
  \phi_{\circ}(v) \leq  l_m \, l,&\hbox{ if } \oplus=\circ.
\end{array}$$

Otherwise, {\bf Condition $Q(v)$} holds in Step~3, so 

$$\begin{array}{ll}
  \phi_{+}(v) \leq \phi_{+}^*(v)+l\le l_m + l,&\hbox{ if } \oplus=+;\\ 
  \phi_{\circ}(v) \leq  \phi_{\circ}^*(v)\,\, l\le  l_m\, l,&\hbox{ if } \oplus=\circ.
\end{array}$$
\end{proof}

\begin{claim}\label{Sat claim}
Let $v_1,v_2 \in W_2$. If $v_1$ reaches the saturated state before $v_2$, then $\phi_{\oplus}(v_1) < \phi_{\oplus}(v_2)$.
\end{claim}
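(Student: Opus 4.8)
The plan is to compare $v_1$ and $v_2$ through the largest $L_b$-label in use at the instant each is saturated. For $v\in W_2$ write $b(v)$ for this label; since $L_s=(l_1,\dots,l_{i_b-1})$ and $L_b=(l_{i_b},\dots,l_m)$ are each consumed in increasing order and every label of $L_b$ exceeds every label of $L_s$, the quantity $b(v)$ equals the largest label of any kind used so far and is non-decreasing along the order of saturation; in particular $b(v_1)\le b(v_2)$. First I would record the value of each vertex in two cases. If $\deg(v)=2$, then by Claim~\ref{fact3} both incident edges are $S$-edges, and by Claim~\ref{fact1}(2) one carries $\alpha\in L_s$ and the other $\beta\in L_b$; one checks $\beta=b(v)$, so $\phi_{\oplus}(v)=\alpha\oplus b(v)$. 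If $\deg(v)\ge 3$ then $v\in W_2$ is saturated inside a Condition-$Q$ step by labelling its last pendant edge with the smallest unused label $\lambda(v)\in L_s$, so $\phi_{\oplus}(v)=\phi_{\oplus}^*(v)\oplus\lambda(v)$.

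The decisive step is a refined two-sided estimate for the degree-$\ge3$ case, namely $b(v)-d<\phi_{\oplus}^*(v)\le b(v)$. The upper bound is exactly Condition $Q(v)$. For the lower bound I would argue that, by Claim~\ref{Almost sat. sum}, $v$ is already almost saturated once $\ell_b$ reaches $\phi_{\oplus}^*(v)$, so the while loop of the algorithm saturates $v$ at the \emph{first} $L_s$-step whose current $\ell_b$ satisfies $\ell_b\ge\phi_{\oplus}^*(v)$; hence $b(v)$ is the least $L_b$-label that is $\ge\phi_{\oplus}^*(v)$, and the preceding $L_b$-label $b(v)-d$ is strictly $<\phi_{\oplus}^*(v)$. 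This is the ingredient that prevents a vertex finishing through Condition $Q$ with a small partial value from being saturated ``too late''.

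With these facts I would group the vertices of $W_2$ by the value of $b(\cdot)$ and prove monotonicity first inside a group and then across groups. Inside a group (fixed $b=\beta$) the saturation order is: the degree-two vertex whose second edge receives $\beta$, then the degree-$\ge3$ vertices in increasing order of $\phi_{\oplus}^*$ (this is how the while loop selects them), with their labels $\lambda$ increasing, then the degree-two vertex whose second edge receives the subsequent $L_s$-label. Each passage strictly increases the value: the only nontrivial comparison is between the first degree-two vertex, of value $\alpha^-\oplus\beta$, and a degree-$\ge3$ vertex $v$, where the refined bound gives $\phi_{\oplus}^*(v)>\beta-d$ and the fresh label satisfies $\lambda(v)\ge\alpha^-+d$ (it is used later than $\alpha^-$), so that $(\beta-d)\oplus(\alpha^-+d)\ge\beta\oplus\alpha^-$ forces $\phi_{\oplus}(v)$ to exceed that vertex's value. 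Across two consecutive groups the last vertex of one and the first of the next share the intervening $L_s$-label while $b$ increases by at least $d$, so the same one-label-advance argument applies.

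The main obstacle I anticipate is precisely this cross-type comparison (a degree-two vertex finishing on its $L_b$-edge against a degree-$\ge3$ vertex finishing through Condition~$Q$): it is where the refined lower bound $\phi_{\oplus}^*(v)>b(v)-d$ is indispensable, and where the naive sandwich $b(v)\le\phi_{\oplus}(v)\le b(v)\oplus(\text{largest }L_s)$ is too lossy to close. The remaining work is routine but must be carried out twice: for $\oplus=+$ the inequalities above are immediate, while for $\oplus=\circ$ each ``$+d$'' becomes the additive gap between consecutive labels inside a product, one uses $\beta\ge\alpha^-+d$ to obtain $(\beta-d)(\alpha^-+d)\ge\beta\alpha^-$, and one invokes $l_1\ge1$ (that is, $L\subseteq\mathbb{R}_{\circ}$), exactly as in the product line of Claim~\ref{Almost sat. sum}, to keep every factor at least one.
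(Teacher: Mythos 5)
Your reduction of the claim to the sandwich $b(v)-d<\phi_{\oplus}^*(v)\le b(v)$ for the degree-$\ge 3$ vertices of $W_2$ is where the argument breaks. The upper bound is indeed just Condition $Q$, but the strict lower bound is not established by your appeal to Claim~\ref{Almost sat. sum}, and it can fail. Your justification tacitly assumes (i) that between two consecutive Condition-$Q$ tests the value $\ell_b$ advances by exactly $d$, and (ii) that $v$ already belonged to $V^*$ at the test where $\ell_b$ had the value $b(v)-d$. Neither is guaranteed: the tests occur only immediately before an $L_s$-assignment (Algorithm~2, lines 12--16), and at boundaries between consecutive members of the sequence $S$ two $L_b$-labels can be consumed with no intervening test --- for instance, an even-size path ends with an $L_b$-label and an odd-size circuit begins with one (the same happens between consecutive odd circuits) --- so $\ell_b$ can jump by $2d$ between tests. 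Moreover, a vertex can become almost saturated strictly between two tests, in which case no test ``at $b(v)-d$'' ever examined it, and Claim~\ref{Almost sat. sum} only yields $\phi_{\oplus}^*(v)\ge \ell_v$, where $\ell_v$ may equal $b(v)-2d$. In these corners one gets only $\phi_{\oplus}^*(v)>b(v)-2d$ (and for $\oplus=\circ$ not even strictly), and then the cross-type estimate $(\beta-d)\oplus(\alpha^-+d)\ge \beta\oplus\alpha^-$ --- which you yourself identify as the indispensable step --- no longer closes the comparison between a degree-two vertex finishing on its $L_b$-edge and a degree-$\ge 3$ vertex finishing through Condition~$Q$.

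It is instructive to see how the paper sidesteps this. It never proves a per-vertex absolute sandwich; instead it argues by four cases on $(\deg v_1,\deg v_2)$ and, in the delicate case ($\deg v_1=2$, $\deg v_2>2$), bounds $\phi_{\oplus}^*(v_2)$ \emph{relative to the specific competitor}: $\phi_{\oplus}^*(v_2)>l'-d$, where $l'\in L_b$ is $v_1$'s big label, because otherwise Condition $Q(v_2)$ would have fired and $v_2$ would have been saturated before $v_1$ --- the hypothesis on the saturation order does the work. Since $l'$ can be as small as $b(v_2)-d$ at the $2d$-jumps, this relative bound is strictly weaker than your $\phi_{\oplus}^*(v_2)>b(v_2)-d$, and that weakening is precisely what survives the corner cases that defeat your lemma; the remaining cases of the paper use only the upper bound $\phi_{\oplus}^*\le \ell_b$ and the increasing consumption of $L_s$. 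To repair your grouped argument you would either have to restore single-$d$ increments between tests (which the algorithm does not provide) or replace the group-boundary comparisons by the paper's relative formulation, at which point the grouping by $b(\cdot)$ buys nothing over the four-case analysis.
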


\begin{proof}
The analysis splits into four cases.
\vspace{1.5mm}

\noindent
{\em Case 1:} $\deg(v_1)=\deg(v_2)=2.$ It follows  from Claims~\ref{fact1} and \ref{fact3}, and the order $S$-edges are labeled in Step 3.
\vspace{1.5mm}

\noindent
{\em Case 2:} $\deg (v_1)=2$ and $\deg(v_2) > 2.$ Let $e$ and $e'$ be the edges incident with $v_1$.
By Claim~\ref{fact1}(2), we can assume 
$\phi(e)=l \in L_s$ and $\phi(e')=l' \in L_b.$
By construction,  we have $\phi_{\oplus}^*(v_2) > l' - d$, otherwise, {\bf Condition $Q(v_2)$} holds and $v_2$ would be saturated before $v_1$.
%suma 
Therefore, if $\oplus=+,$  
then, 

$$ \phi_{+}(v_1)= l + l' < l + d + \phi_{+}^*(v_2)  \leq  \phi_{+}(v_2).$$ 
%producto
and if $\oplus=\circ,$ 
then, 

$$\phi_{\circ}(v_1)= l \, l' {\le}  (l+d)\, (l'-d) < (l + d) \, \phi_{\circ}^*(v_2) \leq \phi_{\circ}(v_2),$$
where the first inequality of the preceding expression holds because $l'-l\ge d$ and hence

$$(l+d)\, (l'-d)=l\, l'+d\,( l'-l-d)
\ge l\, l'.$$

\noindent
 {\em Case 3:} $\deg(v_1) > 2$ and $\deg(v_2) = 2.$  It means that {\bf Condition $Q(v_1)$} holds at some point of Step 3.
 
 % suma
 If $\oplus=+$, assume $\phi_{+}(v_2)=l + l',$ where $l \in L_s$ and $l' \in L_b$. Then, $\phi_{+}(v_1)= l'' + \phi_{+}^*(v_1),$ where $l'' < l$, and   $\phi_{+}^*(v_1) \leq l',$
 since $v_1$ was saturated before $v_2$. Hence 
 
 $$\phi_{+}(v_1)=l'' + \phi_{+}^*(v_1) < l + l' = \phi_{+}(v_2).$$
 
 % producto
 Similarly, if $\oplus=\circ$, 
 assume $\phi_{\circ}(v_2)=l \, l',$ where $l \in L_s$ and $l' \in L_b$. 
 Then, $\phi_{\circ}(v_1)= l'' \, \phi_{\circ}^*(v_1),$ where $l'' < l$,
 and  $\phi_{\circ}^*(v_1) \leq l',$ 
 since $v_1$ was saturated before $v_2$.
 Hence 
 
 $$\phi_{\circ}(v_1)=l'' \, \phi_{\circ}^*(v_1) < l \, l' = \phi_{\circ}(v_2).$$ 

\noindent
{\em Case 4:} $\deg(v_1)>2$ and $\deg(v_2)> 2.$ It means that {\bf Condition $Q(v_1)$} and {\bf  Condition $Q(v_2)$} hold at some point of Step 3. Hence, we have $\phi_{\oplus}^*(v_1) \leq \phi_{\oplus}^*(v_2).$ Besides, if $i\in \{1,2\}$, then

$$\begin{array}{ll}
\phi_{\oplus}(v_i)=\phi_{+}(v_i)=\phi_{+}^*(v_i) + l_{i}\, ,&\hbox{ if }\oplus=+,\\
\phi_{\oplus}(v_i)=\phi_{\circ }(v_i)=\phi_{\circ }^*(v_i)\,\, l_i,&\hbox{ if }\oplus=\circ,
\end{array}$$
where $l_{i} \in L_s$ and $l_1 < l_2$.
Consequently,  $\phi_{\oplus}(v_1) < \phi_{\oplus}(v_2)$.

\end{proof}
At any stage of the labeling process we use the smallest unused label in $L_s$ or $L_b$. Moreover, $\vert L_s\cup L_b\vert=\vert E(G) \vert$ and the number of labels in $L_s$ and $L_b$ is calculated so that in both sets we have enough labels to assign according at every step of the algorithm.
{Besides, in Steps 1 and 2 almost all pendant edges in $G$ {and} $G_1$ are labeled. After that, just before  beginning Step 3, an unlabeled edge is an $S$-edge or the only  pendant edge (in $G$}) incident with a vertex in $V_3$ that remains unlabeled. But all $S$-edges are labeled in Step 3, and the remaining pendant edges are labeled in Step 3 or in Step 4.
Hence, $\phi$ is a bijection.

For $V_1,V_2 \subseteq V(G)$, we say that $\phi_{\oplus}(V_1) < \phi_{\oplus}(V_2)$ if for every $v_1 \in V_1$ and $v_2 \in V_2$ we have $\phi_{\oplus}(v_1) < \phi_{\oplus}(v_2)$.
To show that  the labeling $\phi$ is $\oplus$-antimagic it is enough to prove that 
$\phi_{\oplus}(W_1) < \phi_{\oplus}(W_2) < \phi_{\oplus}(W_3) $
and $\phi_{\oplus}$ restricted to $W_i$ is injective, for every $i\in \{1,2,3\}$.

By construction, $\phi_{\oplus}(W_1) \subseteq L_s \cup \{l_{i_b}\}$.
Notice that the exceptional case $\phi_{\oplus}(v)=l_{i_b}$ for $v \in W_1$ 
occurs when $G$ is a tree and the first path of the GPD of $\mathcal{P}(G)$ is of odd size, 
and ($\oplus=+$ or $l_1 >1$).
Claim~\ref{fact2} implies 

$$\phi_{\oplus}(W_1) < \phi_{\oplus}(W_2\cup W_3).$$ 

Now, if $v \in W_3$, then {\bf Condition $Q(v)$} does not hold in Step 3. Thus, 
$\phi_{\oplus}^*(v) > l_m - d$, since otherwise $\phi_{\oplus}^*(v) \le  l_m - d$ and {\bf Condition $Q(v)$} would hold when assigning the last label of $L_b$, that is assigned in Step 3, and $v$ would be saturated in Step 3, contradicting that $v\in W_3$.
Therefore, if $l$ is the last label from $L_s$ assigned in Step 3, we derive

$$\begin{array}{ll}
\phi_{\oplus}(v) \geq \phi_{+}^*(v) + (l+d) > (l_m-d) + (l+d)\geq l_m + l, &\hbox{ if } \oplus=+,\\
\phi_{\oplus}(v) \geq \phi_{\circ}^*(v) \,\, (l+d) > (l_m-d)  (l+d)=
l_m \,\, l +d(l_m-l-d)
\geq l_m \, l, &\hbox{ if } \oplus=\circ.
\end{array}$$

Hence, by Claim~\ref{Almost sat. sum}, we have

$$\phi_{\oplus}(W_2) < \phi_{\oplus}(W_3).$$

It remains to prove that $\phi_{\oplus}$ restricted to each $W_i$ is injective.  Since the vertices of $W_1$ are leaves and $\phi$ is a bijection, we derive that $\phi_{\oplus}$ restricted to $W_1$ is injective.
Claim \ref{Sat claim} implies that  $\phi_{\oplus}$ restricted to $W_2$ is injective.
Finally, by construction (see Step 4 in  Algorithm 1), it is clear that the vertices in $W_3$ have distinct images under $\phi_{\oplus}$. 
Therefore, we conclude that Theorem~\ref{Antimagic no bald} holds.

\subsection{Concluding remarks}\label{sec:concluding}

By a {\em subdivision} of a graph $G$ we mean any graph 
obtained from $G$ by replacing every edge $e=uv$ 
with a path of length at least one with end-vertices $u$ and $v$.

It is worth emphasizing that if a graph satisfies $V_3\subseteq V_s$, then we can add pendant edges to it and subdivide its edges in a suitable way holding the antimagicness property at the modified graph. {More precisely}, next result follows immediately from Theorem~\ref{Antimagic no bald}.

\begin{corollary}\label{Antimagic subd no bald leafy}
If $G$ is a graph 
 such that $V_3\subseteq V_s$ and $\widetilde{G}$ is a  leafy graph of $G$, then any subdivision of $\widetilde{G}$ is 
arithmetic antimagic and arithmetic product antimagic, provided that no pendant edge of $\widetilde{G}$ is subdivided. 
\end{corollary}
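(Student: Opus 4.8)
The plan is to reduce the corollary to a single application of Theorem~\ref{Antimagic no bald}: I would show that the graph $G'$, obtained as any subdivision of a leafy graph $\widetilde G$ of $G$ in which no pendant edge of $\widetilde G$ is subdivided, again satisfies the hypothesis $V_3(G')\subseteq V_s(G')$. Granting this, and noting that both operations preserve connectivity and cannot decrease the size (so that the connectedness and size-at-least-three requirements are inherited), Theorem~\ref{Antimagic no bald} immediately yields that $G'$ is arithmetic antimagic and arithmetic product antimagic.

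First I would track how vertex degrees and the support-vertex property behave when passing from $G$ to a leafy graph $\widetilde G$. This step only attaches pendant edges at interior vertices of $G$, so it leaves every original leaf a leaf, introduces only new leaves, and can only increase the degree of an interior vertex. Consequently, any vertex $v$ with $\deg_{\widetilde G}(v)\ge 3$ is necessarily an interior vertex of $G$, and one of two situations occurs: either at least one pendant edge was attached to $v$, so that $v\in V_s(\widetilde G)$ at once; or no edge was attached to $v$, whence $\deg_G(v)=\deg_{\widetilde G}(v)\ge 3$ forces $v\in V_3(G)\subseteq V_s(G)$ by hypothesis, and the pendant edge witnessing $v\in V_s(G)$ survives in $\widetilde G$ because its leaf endpoint is never touched. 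Either way $v\in V_s(\widetilde G)$, so $V_3(\widetilde G)\subseteq V_s(\widetilde G)$.

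Next I would carry out the subdivision step $\widetilde G\to G'$. Subdividing an edge creates only new vertices of degree two and leaves the degree of every old vertex unchanged, so the vertices of $G'$ of degree at least three are exactly those of $\widetilde G$, i.e.\ $V_3(G')=V_3(\widetilde G)$. For such a vertex $v$ we already have a pendant edge $vw$ in $\widetilde G$ with $w$ a leaf; since no pendant edge of $\widetilde G$ is subdivided, the edge $vw$ persists in $G'$ and $w$ keeps degree one, so $vw$ remains a pendant edge and $v\in V_s(G')$. This establishes $V_3(G')\subseteq V_s(G')$, completing the reduction.

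I expect the only delicate point to be the case distinction in the leafy-graph step, specifically the sub-case in which a vertex of degree at least three in $\widetilde G$ received no new pendant edge: there one must return to the original hypothesis $V_3(G)\subseteq V_s(G)$ and verify that the witnessing pendant edge is not destroyed by either operation. The remainder is elementary bookkeeping about how degrees and leaves transform, so the real content of the corollary is simply that the property ``every vertex of degree at least three is a support vertex'' is stable under forming leafy graphs and under subdivisions that spare the pendant edges.
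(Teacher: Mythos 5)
Your proposal is correct and matches the paper's approach exactly: the paper gives no separate argument, stating only that the corollary ``follows immediately from Theorem~\ref{Antimagic no bald}'', and your write-up supplies precisely the verification being taken for granted there, namely that the property $V_3\subseteq V_s$ is preserved first by attaching pendant edges at interior vertices and then by subdivisions sparing the pendant edges, with the hypothesis on unsubdivided pendant edges used exactly where it must be (to keep the witnessing pendant edge intact at each vertex of degree at least three). Your identification of the one delicate sub-case --- a vertex of degree at least three in $\widetilde G$ that received no new pendant edge, where one must invoke $V_3(G)\subseteq V_s(G)$ and check the witnessing edge survives both operations --- is accurate and is the only real content hidden behind the paper's ``immediately''.
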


Since  leafy cycles and caterpillars (leafy paths, indeed) satisfy $V_3\subseteq V_s$, we have the following  results as a consequence of Theorem~\ref{Antimagic no bald}.

\begin{corollary}
Leafy cycles are arithmetic antimagic and arithmetic product antimagic.
\end{corollary}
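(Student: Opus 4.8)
The plan is to recognize this corollary as a direct instance of Theorem~\ref{Antimagic no bald}: once I check that a leafy cycle falls under its hypotheses, the conclusion is immediate. So the only real work is to verify that a leafy cycle is a connected graph of size at least three in which every vertex of degree at least three is a support vertex, i.e.\ that $V_3(\widetilde{C})\subseteq V_s(\widetilde{C})$. First I would fix notation: a leafy cycle $\widetilde{C}$ is obtained from a cycle $C_n$ with $n\ge 3$ by attaching a non-negative number of pendant edges to each of its vertices (every vertex of $C_n$ has degree two and is therefore interior, so it is eligible to receive pendant edges). Such a $\widetilde{C}$ is clearly connected, and since $C_n$ already contributes $n\ge 3$ edges, its size $m$ satisfies $m\ge 3$; adding pendant edges only increases $m$. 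Thus the size and connectivity requirements of Theorem~\ref{Antimagic no bald} are met.

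The heart of the argument is the structural observation that in a leafy cycle the degree-structure forces the support condition. I would take an arbitrary vertex $v$ with $\deg_{\widetilde{C}}(v)\ge 3$. In the underlying cycle $C_n$ the vertex $v$ has degree exactly two, so precisely two of the edges incident with $v$ in $\widetilde{C}$ are cycle edges; the remaining $\deg_{\widetilde{C}}(v)-2\ge 1$ edges must be pendant edges introduced in the construction of the leafy cycle. In particular $v$ is incident with at least one pendant edge, so $v\in V_s(\widetilde{C})$. As $v$ was arbitrary, this gives $V_3(\widetilde{C})\subseteq V_s(\widetilde{C})$.

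With the hypotheses verified, I would simply invoke Theorem~\ref{Antimagic no bald} to conclude that $\widetilde{C}$ is arithmetic antimagic and arithmetic product antimagic, completing the proof. I do not expect any genuine obstacle here, since the entire content reduces to the one-line degree count above; the point worth noting is only the degenerate case where no pendant edge is added, so that $\widetilde{C}=C_n$ is a plain cycle. That case is already covered (cycles of size at least three are universal, hence arithmetic, $\oplus$-antimagic by Propositions~\ref{P and C U-A} and~\ref{P and C U-A-prod}), and it is equally consistent with Theorem~\ref{Antimagic no bald}, whose preliminary reductions treat paths and cycles separately; so no special handling is required.
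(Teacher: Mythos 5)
Your proof is correct and follows exactly the paper's route: the paper derives this corollary from Theorem~\ref{Antimagic no bald} with the one-line observation that leafy cycles satisfy $V_3\subseteq V_s$, which is precisely the degree count you spell out. Your extra remark on the degenerate case $\widetilde{C}=C_n$ is also consistent with how the theorem's proof handles cycles via Propositions~\ref{P and C U-A} and~\ref{P and C U-A-prod}.
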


\begin{corollary}\label{caterpillars}
Caterpillars are arithmetic antimagic and arithmetic product antimagic.
\end{corollary}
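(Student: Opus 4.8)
The plan is to check that every caterpillar satisfies the hypotheses of Theorem~\ref{Antimagic no bald} and then to invoke that theorem directly; no fresh labeling construction is required. Recall that a caterpillar is a tree $C$ whose non-leaf vertices induce a path $P = v_1 v_2 \cdots v_k$, the \emph{spine}; equivalently, every vertex of $C$ lies within distance one of $P$. Being a tree, $C$ is connected, so the only conditions that remain to be verified are that $C$ has size at least three and that $V_3(C) \subseteq V_s(C)$.

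The key, and essentially only, step is the structural inclusion $V_3 \subseteq V_s$. Let $v$ be a vertex with $\deg_C(v) \ge 3$. As a leaf has degree one, $v$ is not a leaf, hence $v$ is an interior vertex and so lies on the spine $P$. On $P$ the vertex $v$ has at most two neighbours, so at least $\deg_C(v) - 2 \ge 1$ of the edges incident with $v$ join it to a vertex outside $P$. Since $P$ consists precisely of the interior (non-leaf) vertices of $C$, every such neighbour outside $P$ is a leaf, and the corresponding edge is a pendant edge. Therefore $v$ is a support vertex, which establishes $V_3(C) \subseteq V_s(C)$.

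With this inclusion available I would simply apply Theorem~\ref{Antimagic no bald}: any connected graph of size at least three with $V_3 \subseteq V_s$ is both arithmetic antimagic and arithmetic product antimagic. Hence every caterpillar of size at least three enjoys both properties. The only caterpillars left out are $K_2$ and $P_3$, of sizes one and two, which are excluded for the same reason they are excluded from Theorem~\ref{Antimagic no bald}; the path caterpillars with $m \ge 3$ are covered too, since a path has $V_3 = \emptyset \subseteq V_s$ trivially (and are in fact universal antimagic by Propositions~\ref{P and C U-A} and~\ref{P and C U-A-prod}).

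I do not expect any genuine obstacle: the whole argument reduces to the one-line observation that a vertex of degree at least three in a caterpillar cannot have all of its neighbours on the spine, because the spine accounts for at most two incident edges, so it must carry a pendant edge. The single point worth stating carefully is that the off-spine neighbours are leaves, which is immediate once the spine is identified with the set of interior vertices; after that, the corollary is a direct specialisation of Theorem~\ref{Antimagic no bald}.
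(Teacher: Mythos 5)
Your proposal is correct and follows exactly the paper's route: the paper derives Corollary~\ref{caterpillars} by observing that caterpillars are leafy paths, hence satisfy $V_3\subseteq V_s$, and then invoking Theorem~\ref{Antimagic no bald}. Your explicit verification that a spine vertex of degree at least three must carry a pendant edge simply spells out what the paper leaves as a one-line remark, so the two arguments coincide in substance.
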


It should be noted that, in particular, Corollary~\ref{caterpillars} states that caterpillars are antimagic and product antimagic, which are the main results obtained in  \cite{LMST} and  \cite{WG}, respectively.
\vspace{1mm}

Considering the techniques used to prove that a graph is antimagic, our feeling is that  antimagicness implies arithmetic antimagicness.

\begin{open} Prove or disprove that every  antimagic graph is arithmetic antimagic.
\end{open}

%{\bf Acknowledgments.} This work was partially supported by 
%grant PID2019-104129GB-I00 funded by MICIU/AEI/10.13039/501100011033
%from the Spanish Ministry of Science and Innovation 
%and by Gen. Cat. DGR 2021-SGR-00266.

\bibliographystyle{abbrv} 
\bibliography{bibfile}

\end{document}